\documentclass[11pt]{amsart}

\usepackage[margin=1.12in]{geometry}
\usepackage{amsmath,amssymb,amsthm,mathtools,mathrsfs}
\usepackage{enumitem}
\usepackage{booktabs}
\usepackage{tikz-cd}
\usepackage{xcolor}
\usepackage{hyperref}
\usepackage{orcidlink}

\definecolor{darkgoldenrod}{rgb}{0.72,0.53,0.04}
\definecolor{goldmetallic}{rgb}{0.83,0.69,0.22}
\hypersetup{
 colorlinks=true,
 linkcolor=darkgoldenrod,
 filecolor=brown,
 urlcolor=goldmetallic,
 citecolor=darkgoldenrod,
}

\numberwithin{equation}{section}

\newtheorem{theorem}{Theorem}[section]
\newtheorem{proposition}[theorem]{Proposition}
\newtheorem{lemma}[theorem]{Lemma}
\newtheorem{assumption}[theorem]{Assumption}

\newtheorem{definition}[theorem]{Definition}

\newcommand{\op}[1]{\operatorname{#1}}

\newcommand{\F}{\mathbb F}

\newcommand{\Z}{\mathbb Z}

\newcommand{\cO}{\mathcal O}

\title[Greenberg's \(\mu=0\) conjecture for lisse sheaves]
{Greenberg's \(\mu=0\) conjecture for lisse sheaves over global function fields}

\author[A.~Ray]{Anwesh Ray\, \orcidlink{0000-0001-6946-1559}}
\address{Chennai Mathematical Institute, Chennai, India}
\email{anwesh@cmi.ac.in}

\begin{document}

\begin{abstract}
Let $K$ be a global function field of characteristic \(p>0\) and $\ell\neq p$ be a prime number. We study Selmer groups
over a $\mathbb{Z}_\ell$-extension $K_\infty/K$. For a lisse \(\mathbb Z_\ell\)-sheaf we prove that the Pontryagin dual of the associated Selmer group is a finitely
generated torsion module over the Iwasawa algebra and has \(\mu\)-invariant
equal to zero.  This gives a positive-characteristic,
prime to $p$, analogue of Greenberg's \(\mu=0\) conjecture. Our result applies in particular to abelian varieties,
fine Selmer groups, and adjoint representations. We also prove an analogue of the
weak Leopoldt conjecture in this context over \(K_\infty\), and deduce that the framed
deformation ring of a residual representation is a
formal power series ring. The same conclusion holds for
the unframed deformation ring if the residual representation has no non-scalar endomorphisms.
\end{abstract}

\maketitle

\section{Introduction}
Let \(E/\mathbb Q\) be an elliptic curve and let \(\ell\) be an odd prime.
In the classical cyclotomic Iwasawa theory of elliptic curves, one studies the
\(\ell\)-primary Selmer group $\operatorname{Sel}_{\ell^\infty}(E/\mathbb Q_\infty)$
over the cyclotomic \(\mathbb Z_\ell\)-extension
\(\mathbb Q_\infty/\mathbb Q\). Its Pontryagin dual is a module over the
Iwasawa algebra $\Lambda=\mathbb Z_\ell[[\operatorname{Gal}(\mathbb Q_\infty/\mathbb Q)]]
        \simeq \mathbb Z_\ell[[T]]$. When this dual Selmer group is finitely generated and torsion over
\(\Lambda\), its structure is measured by the Iwasawa invariants \(\lambda\)
and \(\mu\).  Greenberg's \(\mu=0\) conjecture predicts, under the usual
ordinary hypotheses and an irreducibility hypothesis on the residual
representation, that the \(\mu\)-invariant vanishes; see
\cite[Conjecture 1.11]{GreenbergITEC}.  The irreducibility hypothesis is
essential in the number-field setting. When the residual representation is
reducible, positive \(\mu\)-invariants can occur, as in examples going back to
Mazur \cite{Mazurmain}.

\par In this paper, we prove a positive-characteristic analogue of
this \(\mu=0\) phenomenon over global function fields.  Let \(K\) be a global
function field of characteristic \(p>0\), with field of constants
\(k=\mathbb F_q\), and let \(\ell\neq p\).  Write $K=k(\mathscr X)$,
where \(\mathscr X/k\) is a smooth, projective, geometrically integral curve.
Let $k_\infty=\bigcup_{n\geq 0}\mathbb F_{q^{\ell^n}}$
be the unique \(\mathbb Z_\ell\)-extension of \(k\), and set $K_\infty=Kk_\infty$.
Then \(K_\infty/K\) is the only $\Z_\ell$-extension of $K$.

\par Put $\Gamma=\operatorname{Gal}(K_\infty/K)$ and $\Lambda=\mathbb Z_\ell[[\Gamma]]$ be the Iwasawa algebra. After choosing a topological generator \(\gamma\in\Gamma\), we identify
\(\Lambda\) with \(\mathbb Z_\ell[[T]]\) by sending \(\gamma\) to \(1+T\). The natural level of generality for the main theorem is that of lisse
\(\mathbb Z_\ell\)-sheaves on open curves.  Let $\mathscr U\subseteq \mathscr X$ be a nonempty affine open subscheme, and let \(\mathscr T\) be a lisse
\(\mathbb Z_\ell\)-sheaf on \(\mathscr U\), with geometric generic stalk \(T\)
finitely generated and free over \(\mathbb Z_\ell\).  Set $\mathscr W
        =
        \mathscr T\otimes_{\mathbb Z_\ell}\mathbb Q_\ell/\mathbb Z_\ell$ and consider Selmer groups $\operatorname{Sel}_{\mathcal F}(\mathscr W/K_\infty)$ defined using global cohomology over $G_S(K_\infty)=\operatorname{Gal}(K_S/K_\infty)$ where $S:=\mathscr X\setminus \mathscr U$, and local conditions at the places above \(S\). \par The local conditions are
assumed to be cartesian with respect to multiplication by \(\ell\) (in the sense of Assumption \ref{assumption:cartesian-local-conditions}). This hypothesis is
satisfied by the usual Kummer local conditions for abelian varieties, and it is
also the correct compatibility condition for the fine and unramified Selmer
structures considered below.

\begin{theorem}[Theorem \ref{thm:lisse-selmer-mu-zero}]
With respect to notation above, the dual Selmer group is a finitely generated torsion \(\Lambda\)-module whose $\mu$-invariant vanishes.
\end{theorem}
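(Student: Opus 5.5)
The plan is to reduce the whole statement to the finiteness of the $\ell$-torsion of the Selmer group over $K_\infty$. Write $X=\operatorname{Sel}_{\mathcal F}(\mathscr W/K_\infty)^\vee$. A compact $\Lambda$-module $X$ is finitely generated and torsion with $\mu(X)=0$ if and only if $X/\ell X$ is finite. Indeed, $X/\ell X$ finite implies $X/(\ell,T)X$ finite, so $X$ is finitely generated by Nakayama. Moreover $X/\ell X$ is then a torsion $\Lambda/\ell=\F_\ell[[T]]$-module, so $X$ is torsion (rank $r$ would give $(\Lambda/\ell)^r\subset$ quotient up to finite) and $\mu=0$. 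Dually, it suffices to show that $\operatorname{Sel}_{\mathcal F}(\mathscr W/K_\infty)[\ell]$ is finite.

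\textbf{Step 1 (control of $\ell$-torsion).} Use the cartesian hypothesis of Assumption \ref{assumption:cartesian-local-conditions}. It says the local condition for $\mathscr W[\ell]=\mathscr T/\ell$ is the preimage of the condition for $\mathscr W$ under $H^1(\mathscr W[\ell])\to H^1(\mathscr W)$. From the sequence $0\to\mathscr W[\ell]\to\mathscr W\xrightarrow{\ell}\mathscr W\to0$ we get a map
\[
\operatorname{Sel}_{\mathcal F}(\mathscr W[\ell]/K_\infty)\to \operatorname{Sel}_{\mathcal F}(\mathscr W/K_\infty)[\ell].
\]
This map is surjective, since the cartesian property ensures that preimages of Selmer classes satisfy the induced local conditions. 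Its kernel is a quotient of $H^0(G_S(K_\infty),\mathscr W)$. Hence it suffices to show that the mod-$\ell$ Selmer group, and a fortiori $H^1(G_S(K_\infty),\mathscr W[\ell])$ or a suitable subgroup of it, is finite.

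\textbf{Step 2 (geometric finiteness).} This is the key input and the expected main obstacle. Since $K_\infty=Kk_\infty$, the group $G_S(K_\infty)$ is the étale fundamental group of $\mathscr U_{k_\infty}$. The full étale cohomology $H^1(\mathscr U_{k_\infty},\mathscr T/\ell)$ is not finite, because $\bar k/k_\infty$ has a prime-to-$\ell$ part. So one passes through $\bar k$. Let $\Delta=\operatorname{Gal}(\bar k/k_\infty)\simeq\prod_{p'\neq\ell}\Z_{p'}$, which has order prime to $\ell$ in the profinite sense. The Hochschild–Serre spectral sequence, with $H^i(\Delta,-)=0$ for $i>0$ on $\ell$-primary modules, gives
\[
H^1(\mathscr U_{k_\infty},\mathscr T/\ell)\simeq H^1(\mathscr U_{\bar k},\mathscr T/\ell)^{\Delta}.
\]
The right side is finite by finiteness of étale cohomology of constructible sheaves on curves over algebraically closed fields ($\ell\neq p$; for the affine curve no properness is needed, as $H^1_{\acute et}$ of a constructible sheaf on a curve over $\bar k$ is finite).

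One subtlety remains. $G_S$ is the Galois group of the maximal extension unramified outside $S$, not only tamely ramified outside $S$. Its $\ell$-part coincides with $\pi_1(\mathscr U)$ cohomologically for $\ell$-torsion coefficients, which is the case here since $\mathscr U$ is exactly the complement of $S$. So the identification holds. Local conditions then cut out a subgroup, and $H^0$ terms are finite similarly.

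\textbf{Step 3 (conclusion).} Combining the steps, $\operatorname{Sel}_{\mathcal F}(\mathscr W/K_\infty)[\ell]$ is finite, so $X/\ell X$ is finite, and the algebraic lemma gives the theorem. I expect the delicate points to be exactly the cartesian surjectivity in Step 1 and the correct bookkeeping of the prime-to-$\ell$ constant extension in Step 2.
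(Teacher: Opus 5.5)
Your proof is correct and follows the paper's argument essentially step for step: finiteness of $H^1$ over $\mathscr U_{k_\infty}$ via Hochschild--Serre along the pro-prime-to-$\ell$ extension $\bar k/k_\infty$ together with finiteness of étale cohomology on $\mathscr U_{\bar k}$ (Proposition~\ref{prop:lisse-residual-finite}); the cartesian surjection onto $\operatorname{Sel}_{\mathcal F}(\mathscr W/K_\infty)[\ell]$ (Lemma~\ref{lem:lisse-selmer-comparison}); and compact Nakayama plus the structure theorem (Proposition~\ref{cor:lisse-mu-criterion}). Two edits are needed: delete the sentence asserting that $H^1(\mathscr U_{k_\infty},\mathscr T/\ell)$ is infinite, since your own isomorphism shows it is finite, and drop the tame/wild remark, which is unnecessary because $G_S(K_\infty)=\pi_1^{\mathrm{et}}(\mathscr U_{k_\infty})$ exactly.
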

\par We also prove an analogue of the weak Leopoldt conjecture.  In the
classical number-field setting, let \(F\) be a number field, let \(p\) be a
prime, and let \(F_\infty/F\) be the cyclotomic $\Z_p$-extension. Let \(S\) be a finite set of places of
\(F\) containing the primes above \(p\), the archimedean places, and the primes
ramified in \(F_\infty/F\).  If \(V\) is a \(p\)-adic representation of
\(G_S(F)\), \(T\subset V\) is a \(G_S(F)\)-stable lattice, and $W=V/T$
then the weak Leopoldt conjecture predicts the vanishing $H^2(G_S(F_\infty),W)$. For further details, we refer to \cite[Appendix B]{perrinriou}.
\par In the present paper we prove the corresponding statement for
lisse \(\mathbb Z_\ell\)-sheaves.
\begin{theorem}[Theorem \ref{thm:weak-leopoldt-constant}]
Let \(\mathscr T\) be a lisse \(\mathbb Z_\ell\)-sheaf on
\(\mathscr U=\mathscr X\setminus S\), with \(\mathscr U\) affine, and put
\[
        \mathscr W
        =
        \mathscr T\otimes_{\mathbb Z_\ell}\mathbb Q_\ell/\mathbb Z_\ell.
\]
Then the weak Leopoldt conjecture holds for $\mathscr{W}$, i.e.,
\[
        H^2(G_S(K_\infty),\mathscr W)=0.
\]
\end{theorem}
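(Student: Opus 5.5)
We will prove $H^2(G_S(K_\infty),\mathscr W)=0$ by identifying Galois cohomology over $K_\infty$ with étale cohomology of the geometric curve, and then invoking the cohomological dimension of affine curves. Let $\bar k$ be an algebraic closure of $k$ and $\bar{\mathscr U}=\mathscr U\times_k\bar k$. Since $k_\infty\subset\bar k$ and $\operatorname{Gal}(\bar k/k_\infty)\simeq\prod_{r\neq\ell}\Z_r$ has pro-order prime to $\ell$, we will first reduce to the geometric situation. Because $\mathscr W$ is $\ell$-primary torsion and $H^2$ commutes with direct limits, it suffices to show $H^2(G_S(K_\infty),\mathscr W[\ell^n])=0$ for every $n$. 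Then we pass to the limit using $\mathscr W=\varinjlim \mathscr W[\ell^n]$.

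First, we identify $H^i(G_S(K_\infty),M)$ with $H^i_{\mathrm{et}}(\mathscr U\times_k k_\infty,M)$ for a finite $\ell$-primary lisse sheaf $M$ on $\mathscr U$. Here $\mathscr U\times_k k_\infty=\varprojlim \mathscr U\times_k\F_{q^{\ell^n}}$, and each $\mathscr U_n$ is a $K(\pi,1)$ for $\ell$-torsion coefficients: affine curves over fields are $K(\pi,1)$'s, and $\pi_1(\mathscr U_n)=G_S(K k_n)$ because $K_S$ is the maximal extension unramified outside $S$. Passing to the limit over $n$ gives the identification for $\mathscr U_\infty$. We will spell out in a sentence the standard fact that for $\ell\neq p$ the Galois cohomology of $\pi_1^{\text{et}}$ of an affine curve agrees with étale cohomology.

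Second, we apply the Hochschild–Serre spectral sequence for the extension $\bar{\mathscr U}\to\mathscr U_\infty$ with group $H=\operatorname{Gal}(\bar k/k_\infty)$. Since $H$ has pro-order prime to $\ell$ and $M$ is $\ell$-primary, $H^i(H,-)=0$ for $i>0$. Hence
\[
H^2_{\mathrm{et}}(\mathscr U_\infty,M)\simeq H^2_{\mathrm{et}}(\bar{\mathscr U},M)^{H}.
\]
By Artin vanishing, a smooth affine curve $\bar{\mathscr U}$ over an algebraically closed field has $\ell$-cohomological dimension $1$ for constructible torsion sheaves with $\ell\neq p$. Therefore $H^2_{\mathrm{et}}(\bar{\mathscr U},M)=0$, and so $H^2(G_S(K_\infty),M)=0$.

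The main obstacle is the first step: the comparison between $\pi_1$-cohomology and étale cohomology in degree $2$. Here we need the $K(\pi,1)$ property for affine curves with $\ell$-torsion coefficients, $\ell\neq p$. We would try to verify it directly over $\bar k$: $\bar{\mathscr U}$ has a free pro-finite-type fundamental group, whose maximal pro-$\ell$ quotient is free, and we compare cohomological dimensions. Alternatively, we can argue directly with the Hochschild–Serre sequence for $1\to\pi_1(\bar{\mathscr U})\to G_S(K_\infty)\to H\to1$. Here $\operatorname{cd}_\ell\pi_1(\bar{\mathscr U})\le1$ for the open curve, since the tame-at-$\ell$ part is free pro-$\ell$ and the wild part is pro-$p$. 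This gives $H^2=0$ without étale input, and it is the route we would take if the comparison proves awkward.
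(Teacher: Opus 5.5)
Your main argument is correct and is essentially the paper's proof. You reduce to the finite layers $\mathscr T_n=\mathscr W[\ell^n]$ and identify $H^2(G_S(K_\infty),\mathscr T_n)$ with $H^2_{\mathrm{et}}(\mathscr U_\infty,\mathscr T_n)$. You then kill the higher $G_{k_\infty}$-cohomology in the Hochschild--Serre spectral sequence for $\mathscr U_{\bar k}\to\mathscr U_\infty$, using that $G_{k_\infty}$ has pro-order prime to $\ell$, and conclude from the vanishing of $H^2$ on the affine curve $\mathscr U_{\bar k}$.

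The fallback sketches in your last paragraph are not sound, but you do not need them. The group $\pi_1(\mathscr U_{\bar k})$ is not free, and it is not an extension of a free pro-$\ell$ group by a pro-$p$ group. For example, $\pi_1(\mathbb A^1_{\bar k})$ has trivial tame and pro-$\ell$ quotients, yet it surjects onto alternating groups of order divisible by $\ell$; so freeness of the maximal pro-$\ell$ quotient cannot bound $\operatorname{cd}_\ell$. For the vanishing it suffices that the Hochschild--Serre sequence for the pro-universal cover of the connected scheme $\mathscr U_\infty$ gives an injection $H^2(G_S(K_\infty),M)\hookrightarrow H^2_{\mathrm{et}}(\mathscr U_\infty,M)$, and this always holds.
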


\par We also obtain an application to deformation rings.  This should be
compared with the number-field theory of ordinary deformations in cyclotomic
towers.  Hida initiated the study of ordinary deformation rings in this setting;
in particular, he studied the variation of ordinary Galois deformation rings
over the cyclotomic \(\mathbb Z_p\)-extension of a totally real field
\cite{HidaHilbert}. More recently, Burungale and Clozel proved
that ordinary deformations are unobstructed in the cyclotomic limit under
natural hypotheses, including a noetherian hypothesis for the ordinary deformation
ring and the vanishing of certain adjoint \(\mu\)-invariants
\cite{BurungaleClozel}.
\par Let $\bar r:G_S(K)\longrightarrow \operatorname{GL}_d(\mathbb F)$
be a residual representation, where \(\mathbb F\) is a finite field of
characteristic \(\ell\), and let \(\mathcal O=W(\mathbb F)\).  Restricting to
\(G_S(K_\infty)\), write $\bar r_\infty=\bar r|_{G_S(K_\infty)}$.
The adjoint representation
\[
        \operatorname{Ad}\bar r
        =
        \operatorname{End}_{\mathbb F}(V_{\bar r})
\]
defines a constructible \(\mathbb F\)-sheaf on \(\mathscr U\). We prove that
\[
        H^2(G_S(K_\infty),\operatorname{Ad}\bar r)=0
\]
and that $H^1(G_S(K_\infty),\operatorname{Ad}\bar r)$ is finite dimensional.
\begin{theorem}[Theorem \ref{thm:constant-tower-deformation-unobstructed}]
The framed deformation functor of \(\bar r_\infty\) is formally
smooth over \(\mathcal O=W(\mathbb F)\), and its representing ring is a formal
power series ring
\[
        R_{\bar r_\infty}^{\square}
        \simeq
        \mathcal O[[x_1,\ldots,x_m]]
\]
in finitely many variables.  If moreover
\[
        \operatorname{End}_{\mathbb F[G_S(K_\infty)]}(V_{\bar r})=\mathbb F,
\]
then the unframed deformation functor of \(\bar r_\infty\) is representable and
formally smooth over \(\mathcal O\).  In particular,
\[
        R_{\bar r_\infty}
        \simeq
        \mathcal O[[y_1,\ldots,y_{m'}]]
\]
for some \(m'\geq 0\).
\end{theorem}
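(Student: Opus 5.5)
The proof reduces formal smoothness to the two cohomological inputs stated just before the theorem, namely $H^2(G_S(K_\infty),\operatorname{Ad}\bar r)=0$ and $\dim_{\F}H^1(G_S(K_\infty),\operatorname{Ad}\bar r)<\infty$, and then applies the standard Schlessinger–Mazur formalism. The point is to check that this formalism still works for the profinite group $G_S(K_\infty)$, which is not topologically finitely generated in the usual sense.

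First, representability and noetherianity. The framed deformation functor of $\bar r_\infty$ on complete local noetherian $\cO$-algebras with residue field $\F$ satisfies Schlessinger's conditions (H1)–(H4) formally, for any profinite group. Its tangent space sits in an exact sequence
\[
0\to \operatorname{Ad}\bar r/(\operatorname{Ad}\bar r)^{G_S(K_\infty)}\to t^\square\to H^1(G_S(K_\infty),\operatorname{Ad}\bar r)\to 0,
\]
so it is finite-dimensional by the finiteness of $H^1$. (Here one could instead use $\ell$-finiteness: $H^1(G_S(K_\infty),\F)$ is finite, because the maximal pro-$\ell$ quotient of $G_S(K_\infty)$ relevant to $\ker\bar r_\infty$ is finitely generated.) Hence $R^\square_{\bar r_\infty}$ exists and is a quotient of $\cO[[x_1,\dots,x_m]]$ with $m=\dim_{\F}t^\square$.

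Next, formal smoothness. Given a small extension $A'\to A$ with kernel $I$ satisfying $\mathfrak m_{A'}I=0$, and a lift $\rho_A$, the obstruction to lifting $\rho_A$ to $A'$ is a continuous $2$-cocycle class in $H^2(G_S(K_\infty),\operatorname{Ad}\bar r)\otimes_{\F}I$. One first lifts set-theoretically and continuously, which is possible since $\operatorname{GL}_d(A')\to\operatorname{GL}_d(A)$ is surjective and the groups are finite. This class vanishes by the theorem's hypothesis-free input $H^2=0$. The framing step is automatic, so the framed functor is formally smooth. Consequently the surjection $\cO[[x_1,\dots,x_m]]\to R^\square$ is an isomorphism: a formally smooth quotient with the same tangent dimension has no relations, by the standard argument lifting the identity successively modulo $\mathfrak m^n$.

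For the unframed statement, the hypothesis $\operatorname{End}_{\F[G_S(K_\infty)]}(V_{\bar r})=\F$ gives (H4) for the unframed functor, via the standard centralizer-lifting argument, so $R_{\bar r_\infty}$ exists. Its tangent space is $H^1(G_S(K_\infty),\operatorname{Ad}\bar r)$, which is finite. The same obstruction argument, with obstructions in $H^2=0$, gives formal smoothness. Alternatively, $R^\square\simeq R_{\bar r_\infty}[[z_1,\dots,z_{d^2-1}]]$ in this case, and a power series ring over $\cO$ can only arise this way if $R_{\bar r_\infty}$ is itself a power series ring over $\cO$. This gives $m'=\dim_{\F}H^1$.

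The main obstacle is making sure the obstruction theory is legitimate for $G_S(K_\infty)$, which is an infinite-index subgroup. The obstruction must be computed in continuous cohomology with discrete coefficients $\operatorname{Ad}\bar r\otimes I$, and one must identify this with the group shown to vanish. I would handle this by writing $H^2(G_S(K_\infty),-)=\varinjlim_n H^2(G_S(K_n),-)$: the obstruction is defined at a finite layer $K_n$ over which $\rho_A$ is defined, and its image in the limit is zero. Hence it dies at some $K_{n'}$, where a lift exists, and these lifts restrict compatibly to $G_S(K_\infty)$.
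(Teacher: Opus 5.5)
Your deformation-theoretic part is sound and agrees with the standard formalism the paper records just before the theorem. The tangent space $Z^1(G_S(K_\infty),\operatorname{Ad}\bar r)$ sits in your exact sequence with $H^1$. Obstructions across a small extension lie in $H^2(G_S(K_\infty),\operatorname{Ad}\bar r)\otimes_{\F}I$. Formal smoothness together with finite tangent dimension forces a power series ring over $\cO$. Your passage from the framed ring to the unframed one via $R^\square\simeq R_{\bar r_\infty}[[z_1,\dots,z_{d^2-1}]]$ is also fine.

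\textbf{The gap.} As written, the argument is conditional. You take $H^2(G_S(K_\infty),\operatorname{Ad}\bar r)=0$ and $\dim_{\F}H^1(G_S(K_\infty),\operatorname{Ad}\bar r)<\infty$ as given (``the theorem's hypothesis-free input''). These are not hypotheses of the theorem; they are exactly what the paper's proof establishes, and the deduction from them is the routine part. The vanishing is not a formality. At a finite layer, $H^2(G_S(K_n),\operatorname{Ad}\bar r)$ need not vanish: for $d=1$, $\bar r$ trivial, $K=\F_q(t)$ with $\ell\mid q-1$ and $S=\{0,\infty\}$, one has $H^2(G_S(K),\Z/\ell)\simeq\operatorname{Br}(\mathbb G_{m,\F_q})[\ell]\neq 0$. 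So something specific to the constant $\Z_\ell$-tower and to the affineness of $\mathscr U$ has to enter, and your proposal never uses either. Your parenthetical claim that $H^1(G_S(K_\infty),\F)$ is finite because some pro-$\ell$ quotient is finitely generated is likewise unjustified. The ``main obstacle'' you single out is not one: the obstruction calculus works verbatim for any profinite group with continuous cochains valued in a finite discrete module. Your detour through finite layers is harmless but unnecessary.

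\textbf{What is missing.} The paper's argument runs as follows.
\begin{enumerate}
\item Identify $H^i(G_S(K_\infty),\operatorname{Ad}\bar r)$ with $H^i_{\mathrm{et}}(\mathscr U_\infty,\operatorname{Ad}\bar r)$.
\item Apply Hochschild--Serre to the covering $\mathscr U_{\bar k}\to\mathscr U_\infty$. Its group $G_{k_\infty}\simeq\prod_{r\neq\ell}\Z_r$ is pro-prime-to-$\ell$, so by restriction--corestriction it has no higher cohomology on finite $\ell$-primary modules.
\item The spectral sequence therefore collapses to $H^i(G_S(K_\infty),\operatorname{Ad}\bar r)\simeq H^i_{\mathrm{et}}(\mathscr U_{\bar k},\operatorname{Ad}\bar r)^{G_{k_\infty}}$.
\item Since $\mathscr U_{\bar k}$ is an affine curve over an algebraically closed field, Artin vanishing gives $H^2_{\mathrm{et}}(\mathscr U_{\bar k},\operatorname{Ad}\bar r)=0$.
\item Finiteness of \'etale cohomology of constructible prime-to-$p$ sheaves over $\bar k$ gives $\dim_{\F}H^1<\infty$, hence $\dim_{\F}Z^1<\infty$.
\end{enumerate}
Once these two facts are supplied, your formal argument completes the proof.
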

\noindent We note that in our setting, there is no noetherian hypothesis on the deformation rings and we do not impose a $\mu=0$ hypothesis for the adjoint representation. 
\par Iwasawa theory over global function fields has been developed in several
directions. The work of Bandini and Longhi on Selmer groups for elliptic
curves over function fields
\cite{bandinilonghi} is especially close in spirit to the present paper. They
establish foundational control and cotorsion results for elliptic curves over
function fields in the prime-to-characteristic setting.  Bandini and Valentino
study Selmer groups of abelian varieties over \(\ell\)-adic Lie extensions of
global function fields in \cite{BandiniValentino}.  Bandini, Bars and
Longhi study characteristic ideals and their relation to Selmer
groups in \cite{BandiniBarsLonghi}. These works establish control theorems, cotorsion results, and structural
properties of Selmer groups over Iwasawa algebras in the function-field
setting.
\par Witte \cite{witte} established a noncommutative Iwasawa main conjecture for general $\ell$-adic representations over global function fields. Deng, Kezuka, Li, and Lim \cite{DengKezukaLiLim} proved the $\mathfrak M_H(G)$-conjecture for the $\ell$-primary Selmer groups of abelian varieties and, in particular, the vanishing of the cyclotomic $\mu$-invariant. Thus, in the case of abelian varieties, the corresponding $\mu$-vanishing result in the present paper is already contained in their work. Our aim here is to develop the sheaf-theoretic framework and use it as a basis for the further results proved below. In particular, our results apply to Greenberg Selmer groups associated with ordinary lisse sheaves, including those arising from symmetric powers of Tate modules of abelian varieties and from trace-zero adjoint sheaves.

\section{Selmer groups of lisse etale sheaves}
\subsection{Lisse $\Z_\ell$-sheaves}\label{s 2.1}
\par We introduce basic notation and the fundamental objects of study in this article. Let $p$ be a prime number, let $q$ be a power of $p$, and set
$k:=\mathbb F_q$. Let $K$ be a global function field of characteristic $p$
over $k$. In other words, $K$ is a finite separable algebraic extension of the function field $k(X)$. Choose separable closures $\bar{k}/k$ and
$\overline{K}/K$ such that $\bar{k}\subset \overline{K}$. We write $G_k:=\operatorname{Gal}(\bar{k}/k)$ and $G_K:=\operatorname{Gal}(\overline{K}/K)$. We assume without loss of generality that $k$ is the field of constants of $K$, i.e.,
\begin{equation}\label{fieldofconstants}
K\cap \bar{k}=k.
\end{equation}
\par Let $\sigma_k\in G_k$ denote the arithmetic Frobenius, given by $\sigma_k(x)=x^q$. Note that $G_k$ is topologically generated by
$\sigma_k$. Thus there is a canonical identification $G_k\simeq \widehat{\mathbb Z}$, sending $\sigma_k$ to $1$. By the linear disjointness condition \eqref{fieldofconstants}, restriction induces a natural isomorphism
\[
\operatorname{Gal}(K\cdot\bar{k}/K)\xrightarrow{\sim}G_k\simeq \widehat{\mathbb Z}.
\]
We shall use the decomposition
$\widehat{\mathbb Z}\simeq \prod_r \mathbb Z_r$,
where $r$ ranges over all prime numbers. Let $\ell\neq p$ be a prime number. We denote by $k_\infty\subset \bar{k}$
the unique $\mathbb Z_\ell$-extension of $k$; explicitly,
\[
k_\infty=\bigcup_{n\geq 0}\mathbb F_{q^{\ell^n}}.
\]
Then $\operatorname{Gal}(k_\infty/k)\simeq \mathbb Z_\ell$, and, under the above identification $G_k\simeq \prod_r\mathbb Z_r$, one has
\[
\operatorname{Gal}(\bar{k}/k_\infty)\simeq \prod_{r\neq \ell}\mathbb Z_r.
\]

Let $\mathscr X/k$ be the smooth, projective, geometrically integral curve
with function field $K=k(\mathscr X)$. The condition
\eqref{fieldofconstants} translates to the assumption that $k$ is the
field of constants of $\mathscr X$. A $\Z_\ell$-extension $K_\infty/K$ is an infinite Galois extension for which $\Gamma:=\op{Gal}(K_\infty/K)$ is isomorphic to $\Z_\ell$. The \emph{constant} $\mathbb Z_\ell$-extension of $K$ is defined by $K_\infty^c:=K\cdot k_\infty$.
Since $K$ and $\bar{k}$ are linearly disjoint over $k$, restriction induces isomorphisms
\begin{equation}\label{Gammaiso}
\Gamma\simeq \operatorname{Gal}(k_\infty/k)\simeq \mathbb Z_\ell.
\end{equation}

\begin{proposition}\label{prop:all-zl-extensions-constant}
Every
\(\mathbb Z_\ell\)-extension of \(K\) is the constant
\(\mathbb Z_\ell\)-extension.
\end{proposition}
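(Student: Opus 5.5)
The plan is to use class field theory to show that any \(\mathbb Z_\ell\)-extension \(L/K\) is unramified everywhere, and then to show that the maximal unramified abelian pro-\(\ell\) extension of \(K\) has Galois group \(\mathbb Z_\ell\times(\text{finite})\). Such a group has only one \(\mathbb Z_\ell\)-quotient, and the constant extension \(K^c_\infty\) realizes it.

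\emph{Step 1: \(L/K\) is unramified.} Fix a place \(v\) of \(K\) with residue field \(k_v\) and completion \(K_v\). By local class field theory, the inertia subgroup at \(v\) of \(\operatorname{Gal}(L/K)\simeq\mathbb Z_\ell\) is the image of \(\mathcal O_v^\times\) under the local reciprocity map. We have \(\mathcal O_v^\times\simeq k_v^\times\times(1+\mathfrak m_v)\).
\begin{itemize}
\item The factor \(1+\mathfrak m_v\) is pro-\(p\), and \(\ell\neq p\), so every continuous homomorphism from it to the pro-\(\ell\) group \(\mathbb Z_\ell\) is trivial.
\item The factor \(k_v^\times\) is finite, so its image in \(\mathbb Z_\ell\) is a finite subgroup.
\end{itemize}
Since \(\mathbb Z_\ell\) is torsion-free, the inertia group at \(v\) is trivial. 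Hence \(L/K\) is unramified at every place.

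\emph{Step 2: unramified abelian pro-\(\ell\) extensions.} By global class field theory for function fields, the Galois group of the maximal unramified abelian extension of \(K\) is the profinite completion of \(\operatorname{Pic}(\mathscr X)\). The degree map gives an exact sequence
\[
0\to \operatorname{Pic}^0(\mathscr X)\to \operatorname{Pic}(\mathscr X)\xrightarrow{\deg}\mathbb Z\to 0.
\]
It is surjective onto \(\mathbb Z\) because \(k\) is the field of constants, so by F.~K.~Schmidt there is a divisor of degree one. The group \(\operatorname{Pic}^0(\mathscr X)\) is finite, being the group of \(k\)-points of the Jacobian. Taking pro-\(\ell\) completions, the maximal unramified abelian pro-\(\ell\) quotient \(G^{\mathrm{ur},\mathrm{ab}}_K(\ell)\) is an extension of \(\mathbb Z_\ell\) by the finite group \(A:=\operatorname{Pic}^0(\mathscr X)[\ell^\infty]\). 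The sequence splits because \(\mathbb Z_\ell\) is free pro-\(\ell\), so \(G^{\mathrm{ur},\mathrm{ab}}_K(\ell)\simeq \mathbb Z_\ell\times A\).

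\emph{Step 3: uniqueness.} Continuous surjections \(\mathbb Z_\ell\times A\to\mathbb Z_\ell\) vanish on the torsion subgroup \(A\). They therefore factor through the projection to \(\mathbb Z_\ell\), and differ from it by a unit of \(\mathbb Z_\ell\). Hence all of them have the same kernel, \(A\). So there is exactly one unramified \(\mathbb Z_\ell\)-extension of \(K\).

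The constant extension \(K^c_\infty=Kk_\infty\) is unramified, since it is obtained by extending constants, and by \eqref{Gammaiso} its Galois group is \(\mathbb Z_\ell\). By Step 1, \(L\) is also an unramified \(\mathbb Z_\ell\)-extension, so Step 3 gives \(L=K^c_\infty\).

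The main point needing care is Step 1. The global argument, phrased with the idele class group, requires knowing that the pro-\(\ell\) part of \(\prod_v \mathcal O_v^\times\) admits no nonzero map to \(\mathbb Z_\ell\). Working one place at a time with local reciprocity avoids this: each local image is finite, and \(\mathbb Z_\ell\) is torsion-free.
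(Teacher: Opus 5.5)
Your proof is correct and follows the same route as the paper. You use local class field theory to show that every $\mathbb Z_\ell$-extension is everywhere unramified, then the degree sequence from global class field theory (with finite $\operatorname{Pic}^0$) to see that the unramified abelian pro-$\ell$ quotient has a unique $\mathbb Z_\ell$-quotient. Your departures from the paper are small. Citing F.~K.~Schmidt for surjectivity of the degree and concluding $L=K^c_\infty$ by uniqueness, rather than identifying the degree quotient with constant field extension, are slight improvements; the splitting in Step 2 is harmless but not needed.
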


\begin{proof}
Let \(K_\infty/K\) be a \(\mathbb Z_\ell\)-extension, and write
\[
        \Gamma=\operatorname{Gal}(K_\infty/K)\simeq \mathbb Z_\ell.
\]
We first show that \(K_\infty/K\) is unramified at every place of \(K\).

Let \(v\) be a place of \(K\), and choose a place \(w\) of \(K_\infty\) above
\(v\).  The inertia subgroup \(I_w\subseteq \Gamma\) is a closed subgroup of
\(\mathbb Z_\ell\).  Hence \(I_w\) is either trivial or infinite.  On the
other hand, local class field theory shows that the ramified pro-\(\ell\)
quotient of \(G_{K_v}^{\mathrm{ab}}\) is finite.  Indeed,
\[
        K_v^\times\simeq \langle \varpi_v\rangle\times \mathcal O_v^\times,
        \quad\text{and}\quad
        \mathcal O_v^\times\simeq k_v^\times\times U_v^1,
\]
where \(U_v^1\) is a pro-\(p\) group and \(k_v^\times\) is finite.  Since
\(\ell\neq p\), the pro-\(\ell\) part of \(\mathcal O_v^\times\) is finite.
The only infinite pro-\(\ell\) quotient of \(K_v^\times\) comes from the
uniformizer factor, and corresponds to the unramified local
\(\mathbb Z_\ell\)-extension.  Therefore the image of local inertia in
\(\Gamma\) is finite.  Since \(\mathbb Z_\ell\) has no nontrivial finite
subgroups, this image is trivial.  Thus \(K_\infty/K\) is unramified at \(v\).
Since \(v\) was arbitrary, \(K_\infty/K\) is everywhere unramified.

Let \(X/k\) be the smooth projective geometrically integral curve with
function field \(K\).  By global class field theory for function fields (cf.~\cite{Rosen}), the
Galois group of the maximal everywhere unramified abelian extension of \(K\)
fits into an exact sequence
\[
        0
        \longrightarrow
        \operatorname{Pic}^0(X)(k)
        \longrightarrow
        \operatorname{Gal}(K^{\mathrm{ur,ab}}/K)
        \xrightarrow{\deg}
        \widehat{\mathbb Z}
        \longrightarrow
        0.
\]
Here the degree map corresponds to extension of constants, and
\(\operatorname{Pic}^0(X)(k)\) is finite.  Passing to maximal pro-\(\ell\)
quotients gives
\[
        0
        \longrightarrow
        \operatorname{Pic}^0(X)(k)[\ell^\infty]
        \longrightarrow
        \operatorname{Gal}(K^{\mathrm{ur,ab}}/K)^{(\ell)}
        \longrightarrow
        \mathbb Z_\ell
        \longrightarrow
        0,
\]
where the kernel is finite. Consider the quotient map
\[\operatorname{Gal}(K^{\mathrm{ur,ab}}/K)^{(\ell)}\twoheadrightarrow \op{Gal}(K_\infty/K).\]
The finite subgroup
\(\operatorname{Pic}^0(X)(k)[\ell^\infty]\) maps trivially to this quotient.
Hence the quotient coincides with the degree map
\[
        \operatorname{Gal}(K^{\mathrm{ur,ab}}/K)^{(\ell)}
        \longrightarrow
        \mathbb Z_\ell.
\]
But this degree quotient corresponds precisely the constant
\(\mathbb Z_\ell\)-extension \(K^c_\infty/K\).  Therefore
\[
        K_\infty=K^c_\infty.
\]
\end{proof}
When \(\ell=p\), the situation is completely
different. In this case $K$ has infinitely many independent $\Z_p$-extensions. This is to say that there are infinitely many $\Z_p$-extensions $K_\infty^i$ such that $K_\infty^{i+1}$ is not contained in the composition of $K_\infty^j$ for $j=1, \dots, i$. These extensions are naturally described by
Artin--Schreier--Witt theory rather than by the tame prime-to-\(p\) class field
theory used above (see Kosters--Wan
\cite{kosters2016arithmetic, kosterswan2}).

\par We shall henceforth denote the constant $\Z_\ell$-extension by $K_\infty/K$. In light of the above result, this is the only $\Z_\ell$-extension of $K$. For every integer $n\geq 0$, let $K_n/K$ be the subextension for which $[K_n:K]=\ell^n$. The field $K_n$ is called the \emph{$n$-th layer} and we have a tower of function fields
\[K=K_0\subseteq K_1\subseteq \dots \subseteq K_n\subseteq K_{n+1}\subseteq \dots \subseteq \bigcup_{m\geq 0}K_m=K_\infty. \]
The Galois group $\op{Gal}(K_n/K)$ is identified with the quotient $\Gamma/\Gamma^{\ell^n}$. The \emph{Iwasawa algebra} associated to $\Gamma$ is defined to be the completed group algebra \[\Lambda:=\varprojlim_n \Z_\ell\left[\Gamma/\Gamma^{\ell^n}\right].\]
Choose a topological generator \(\gamma\in\Gamma\) which maps to to $1\in \Z_\ell$ under the isomorphism \eqref{Gammaiso}. We identify
\(\Lambda\) with the formal power series ring \(\mathbb Z_\ell[[T]]\) upon identifying \(\gamma\) with the element \(1+T\).
\par We now introduce the sheaves which shall give rise to Galois representations. Let $\mathscr U\subseteq \mathscr X$ be a nonempty open affine subscheme. Since
$\mathscr X$ is a smooth projective geometrically integral curve over $k$,
the complement $\mathscr X\setminus \mathscr U$ is a finite set of closed points $S$. The points in $S$ give rise to places $w$ of the function field $K$. We shall identify the set $S$ with a set of places $w$. Thus $\mathscr U$ should be thought of as the curve $\mathscr X$
with finitely many places removed. Each of these places may be assumed, after replacing $k$ by a finite extension, to be defined over $k$. Here
$\ell\neq p$, so $\ell$ is invertible on $\mathscr U$.
\begin{definition}By a \emph{lisse
$\mathbb Z_\ell$-sheaf} we mean an inverse system $\mathscr T=\{\mathscr T_n\}_{n\geq 1}$ where each $\mathscr T_n$ is a finite type, locally constant, constructible sheaf of $\mathbb Z/\ell^n\mathbb Z$-modules on $\mathscr U_{\mathrm{\acute et}}$ and
the transition maps satisfy
\[
\mathscr T_{n+1}\otimes_{\mathbb Z/\ell^{n+1}\mathbb Z}
\mathbb Z/\ell^n\mathbb Z
\simeq \mathscr T_n.
\]
\end{definition}
\par Choose a geometric generic point \(\bar\eta\) of \(\mathscr U\). For each \(n\), the stalk of \(\mathscr T_n\) at the geometric generic point
\(\bar\eta\) is defined by
\[
        (\mathscr T_n)_{\bar\eta}
        :=
        \varinjlim_{(\bar\eta\to V\to \mathscr U)}
        \mathscr T_n(V),
\]
where the limit is taken over all étale neighbourhoods of \(\bar\eta\).  Since
\(\mathscr T_n\) is locally constant constructible, this stalk is a finite
\(\mathbb Z/\ell^n\mathbb Z\)-module equipped with a continuous action of
\(\pi_1^{\mathrm{\acute et}}(\mathscr U,\bar\eta)\).  For the inverse system
\(\mathscr T=\{\mathscr T_n\}_{n\geq 1}\), we set
\[
        T:=\varprojlim_n(\mathscr T_n)_{\bar\eta}.
\]
Then \(T\) is a \(\mathbb Z_\ell\)-module with a continuous action of
\(\pi_1^{\mathrm{\acute et}}(\mathscr U,\bar\eta)\).  We assume throughout
that \(T\) is finite free over \(\mathbb Z_\ell\).
By abuse of notation, we shall also set
\[\mathscr T:=\varprojlim_n \mathscr T_n\] and we identify $\mathscr T_n$ with $\mathscr T/\ell^n \mathscr T$.
\par A lisse $\Z_\ell$-sheaf $\mathscr T$ gives rise to a continuous representation
\[
\rho_{\mathscr T}:\pi_1^{\mathrm{et}}(\mathscr U,\bar\eta)
\longrightarrow \operatorname{Aut}_{\mathbb Z_\ell}(T)
\simeq \operatorname{GL}_r(\mathbb Z_\ell),
\]
where $r=\operatorname{rank}_{\mathbb Z_\ell}T$; see \cite[Section 59.65, Lemma 59.65.1]{Stacks}. The continuity condition is imposed with respect to the profinite topology
on $\pi_1^{\mathrm{et}}(\mathscr U,\bar\eta)$ and the $\ell$-adic topology
on $\operatorname{GL}_r(\mathbb Z_\ell)$. Thus $H^1\left(\pi_1^{\mathrm{et}}(\mathscr U,\bar\eta), T\right)$ shall refer to continuous cohomology classes on $\pi_1^{\mathrm{et}}(\mathscr U,\bar\eta)$ with values in $T$. On the other hand, $H^1_{\mathrm{et}}(\mathscr{U}, \mathscr T)$ shall be used for the space of \'etale $1$-cohomology classes.
\par The étale
fundamental group of $\mathscr U$ fits into the standard exact sequence
\[
1\longrightarrow \pi_1^{\mathrm{et}}(\mathscr U_{\bar{k}},\bar\eta)
\longrightarrow \pi_1^{\mathrm{et}}(\mathscr U,\bar\eta)
\longrightarrow G_k
\longrightarrow 1.
\]
Thus the representation attached to $\mathscr T$ contains both geometric
information, coming from the action of
$\pi_1^{\mathrm{et}}(\mathscr U_{\bar{k}},\bar\eta)$, and arithmetic
information, coming from the action of the Frobenius element in $G_k$.
If \(S=\mathscr X\setminus\mathscr U\), then
\[
        \pi_1^{\mathrm{et}}(\mathscr U,\bar\eta)
        \simeq
        \operatorname{Gal}(K_S/K)
\]where $K_S\subset \overline{K}$ is the maximal separable extension of $K$ unramified ouside $S$. Thus $T$
may also be viewed as a continuous \(\mathbb Z_\ell\)-representation of
\(G_K\), unramified at every closed point of \(\mathscr U\).

\par Associated to $\mathscr T$, we define the associated discrete $\ell$-primary torsion sheaf by $\mathscr W
        :=
        \varinjlim_n \mathscr T_n$, where the transition maps
\[
        \mathscr T_n\longrightarrow \mathscr T_{n+1}
\]
are induced by multiplication by $\ell$. We note that $\mathscr W$ is the sheaf associated to
the discrete $\pi_1^{\mathrm{et}}(\mathscr U,\bar\eta)$-module $W:=T\otimes_{\mathbb Z_\ell}\mathbb Q_\ell/\mathbb Z_\ell$. We also set $\mathscr M:=\mathscr T/\ell\mathscr T=\mathscr T_1$. These sheaves are related by a short exact sequence
\[0\rightarrow \mathscr M\rightarrow \mathscr W\xrightarrow{\times \ell }\mathscr W\rightarrow 0. \]
\subsection{Selmer groups}
\par We now define the notion of a Selmer structures on \(\mathscr W\) and \(\mathscr M\). Let \(F/K\) be a separable extension contained in $K_S$. We
write
\[
        G_S(F):=\operatorname{Gal}(K_S/F).
\]
We shall use the shorthand 
\[H^i(K_S/F, \cdot):=H^i(G_S(F), \cdot).\]
Let \(\mathscr U_F\) be the open curve with function field \(F\) defined by $\mathscr U_F:=\mathscr X_F\setminus S$. The global cohomology groups
appearing in the definition of the Selmer group are
\[
        H^i_{\mathrm{et}}(\mathscr U_F,\mathscr M)
        =
        H^i(K_S/F,\mathscr M),
        \quad\text{and}\quad
        H^i_{\mathrm{et}}(\mathscr U_F,\mathscr W)
        =
        H^i(K_S/F,\mathscr W).
\]

\par For a place \(w\) of \(F\), let \(F_w\) be the completion of \(F\) at \(w\),
and let \(G_{F_w}\) be its absolute Galois group. Denote by $H^1(F_w,\mathscr M)$ and $H^1(F_w,\mathscr W)$ the local Galois cohomology groups $H^1(G_{F_w},\mathscr M)$ and $H^1(G_{F_w},\mathscr W)$ respectively. Let \(S_F\) be the set of places of \(F\) above
\(S\). A Selmer structure \(\mathcal F\) associated to $(\mathscr{W}, \mathscr{M})$ consists of local subgroups
\[
        \mathcal L_w(\mathscr W)
        \subseteq
        H^1(F_w,\mathscr W)\quad \text{and} \quad\mathcal L_w(\mathscr M)
        \subseteq
        H^1(F_w,\mathscr M)
\]
for each \(w\in S_F\).
The Selmer group associated to this datum is
\[
\operatorname{Sel}_{\mathcal F}(\mathscr W/F)
=
\ker\left\{
H^1(G_S(F),\mathscr W)
\longrightarrow
\prod_{w\in S_F}
\frac{H^1(G_{F_w},\mathscr W)}{\mathcal L_w(\mathscr W)}
\right\},
\]
and the residual Selmer group is
\[
\operatorname{Sel}_{\mathcal F}(\mathscr M/F)
=
\ker\left\{
H^1(G_S(F),\mathscr M)
\longrightarrow
\prod_{w\in S_F}
\frac{H^1(G_{F_w},\mathscr M)}{\mathcal L_w(\mathscr M)}
\right\}.
\]
\begin{assumption}\label{assumption:cartesian-local-conditions}
We assume that the Selmer structure is cartesian with respect to
multiplication by \(\ell\).  More precisely, for every place \(w\mid S\), let
\[
        \beta_w:
        H^1(F_w,\mathscr M)
        \longrightarrow
        H^1(F_w,\mathscr W)[\ell]
\]
be the natural map induced by the exact sequence
\[
        0
        \longrightarrow
        \mathscr M
        \longrightarrow
        \mathscr W
        \xrightarrow{\ell}
        \mathscr W
        \longrightarrow
        0.
\]
We require that
\[
        \mathcal L_w(\mathscr M)
        =
        \beta_w^{-1}\bigl(\mathcal L_w(\mathscr W)[\ell]\bigr).
\]
\end{assumption}
\begin{lemma}\label{lem:lisse-selmer-comparison}
The natural map
\[
        \operatorname{Sel}_{\mathcal F}(\mathscr M/F)
        \longrightarrow
        \operatorname{Sel}_{\mathcal F}(\mathscr W/F)[\ell]
\]
is surjective.  In particular, if
\(\operatorname{Sel}_{\mathcal F}(\mathscr M/F)\) is finite, then
\(\operatorname{Sel}_{\mathcal F}(\mathscr W/F)[\ell]\) is finite.
\end{lemma}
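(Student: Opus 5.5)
The plan is to compare the global Kummer-type sequence with its local counterparts and then use Assumption \ref{assumption:cartesian-local-conditions} to transfer the local conditions. The short exact sequence $0\to\mathscr M\to\mathscr W\xrightarrow{\ell}\mathscr W\to 0$ gives, in $G_S(F)$-cohomology, the exact sequence
\[
        H^0(G_S(F),\mathscr W)\xrightarrow{\ell}H^0(G_S(F),\mathscr W)\longrightarrow H^1(G_S(F),\mathscr M)\xrightarrow{\ \beta\ } H^1(G_S(F),\mathscr W)[\ell]\longrightarrow 0 .
\]
So the global map $\beta$ onto the $\ell$-torsion is surjective. The same sequence holds for each $G_{F_w}$ with $w\in S_F$, and the maps $\beta_w$ commute with restriction. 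Note that $\beta$ sends $\operatorname{Sel}_{\mathcal F}(\mathscr M/F)$ into $\operatorname{Sel}_{\mathcal F}(\mathscr W/F)[\ell]$, since by the assumption $\beta_w(\mathcal L_w(\mathscr M))\subseteq \mathcal L_w(\mathscr W)[\ell]$.

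For surjectivity, I will take $x\in\operatorname{Sel}_{\mathcal F}(\mathscr W/F)[\ell]$. Since $x\in H^1(G_S(F),\mathscr W)[\ell]$, surjectivity of $\beta$ gives a class $y\in H^1(G_S(F),\mathscr M)$ with $\beta(y)=x$. Next, fix $w\in S_F$. By compatibility, $\beta_w(\operatorname{res}_w y)=\operatorname{res}_w x$. This lies in $\mathcal L_w(\mathscr W)$ because $x$ is a Selmer class, and it is killed by $\ell$, so it lies in $\mathcal L_w(\mathscr W)[\ell]$. The cartesian condition $\mathcal L_w(\mathscr M)=\beta_w^{-1}(\mathcal L_w(\mathscr W)[\ell])$ then gives $\operatorname{res}_w y\in\mathcal L_w(\mathscr M)$. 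Hence $y\in\operatorname{Sel}_{\mathcal F}(\mathscr M/F)$ and it maps to $x$. The key point is that the assumption is a \emph{preimage} condition: it handles any lift $y$, so there is no need to adjust $y$ by elements of $\ker\beta$.

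The only point needing some care is that the global sequence is exact in continuous cohomology for the profinite group $G_S(F)$ with discrete coefficients. This is standard, since $\mathscr W$ is a discrete torsion module and the sequence of modules is exact. For infinite $F$ such as $K_\infty$, it also follows by passing to the direct limit over finite layers. The finiteness statement is then immediate, because a surjective image of a finite group is finite.
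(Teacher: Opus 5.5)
Your proposal is correct and matches the paper's proof: you lift $x$ through the surjection $H^1(G_S(F),\mathscr M)\to H^1(G_S(F),\mathscr W)[\ell]$, use compatibility of $\beta$ with localization, and apply the preimage form $\mathcal L_w(\mathscr M)=\beta_w^{-1}\bigl(\mathcal L_w(\mathscr W)[\ell]\bigr)$ to conclude that any lift already satisfies the residual local conditions. You also check that the map is well defined, via the inclusion $\beta_w(\mathcal L_w(\mathscr M))\subseteq\mathcal L_w(\mathscr W)[\ell]$, which the paper leaves implicit.
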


\begin{proof}
Let
\[
        \beta_F:
        H^1_{\mathrm{et}}(\mathscr U_F,\mathscr M)
        \longrightarrow
        H^1_{\mathrm{et}}(\mathscr U_F,\mathscr W)
\]
be the map induced by the short exact sequence
\[
        0
        \longrightarrow
        \mathscr M
        \longrightarrow
        \mathscr W
        \xrightarrow{\ell}
        \mathscr W
        \longrightarrow
        0.
\]
The associated long exact sequence gives
\[
        \operatorname{im}(\beta_F)
        =
        H^1_{\mathrm{et}}(\mathscr U_F,\mathscr W)[\ell].
\]
Thus every class
\[
        c\in
        \operatorname{Sel}_{\mathcal F}(\mathscr W/F)[\ell]
\]
admits a global lift
\[
        \widetilde c\in
        H^1_{\mathrm{et}}(\mathscr U_F,\mathscr M)
\]
such that
\[
        \beta_F(\widetilde c)=c.
\]

We show that this lift automatically satisfies the residual Selmer conditions.
Let \(w\in S_F\).  By functoriality of localization, the diagram
\[
\begin{tikzcd}
H^1_{\mathrm{et}}(\mathscr U_F,\mathscr M)
    \arrow[r,"\beta_F"]
    \arrow[d,"\operatorname{loc}_w"]
&
H^1_{\mathrm{et}}(\mathscr U_F,\mathscr W)[\ell]
    \arrow[d,"\operatorname{loc}_w"]
\\
H^1(F_w,\mathscr M)
    \arrow[r,"\beta_w"]
&
H^1(F_w,\mathscr W)[\ell]
\end{tikzcd}
\]
commutes.  Since \(c\) belongs to
\(\operatorname{Sel}_{\mathcal F}(\mathscr W/F)\), we have
\[
        \operatorname{loc}_w(c)\in \mathcal L_w(\mathscr W)[\ell].
\]
Therefore
\[
        \beta_w\bigl(\operatorname{loc}_w(\widetilde c)\bigr)
        =
        \operatorname{loc}_w(c)
        \in
        \mathcal L_w(\mathscr W)[\ell].
\]
By the cartesian hypothesis,
\[
        \operatorname{loc}_w(\widetilde c)
        \in
        \beta_w^{-1}\bigl(\mathcal L_w(\mathscr W)[\ell]\bigr)
        =
        \mathcal L_w(\mathscr M).
\]
This holds for every \(w\in S_F\).  Hence
\[
        \widetilde c\in
        \operatorname{Sel}_{\mathcal F}(\mathscr M/F).
\]
Thus every class in
\(\operatorname{Sel}_{\mathcal F}(\mathscr W/F)[\ell]\) is the image of a
class in \(\operatorname{Sel}_{\mathcal F}(\mathscr M/F)\), proving the
surjectivity.

The final assertion follows immediately, since a quotient of a finite group is
finite.
\end{proof}

\subsection{Abelian varieties}
\par We spell out the case of abelian varieties, since it is the motivating
example for the general definition. Let \(A/K\) be an abelian variety, and let
\(S\) be a finite set of geometric points of \(\mathscr X\) containing all places
where \(A\) has bad reduction. Assume without loss of generality (upon extending the field $k$) that all points of $S$ are defined over $k$. Setting \(\mathscr U=\mathscr X\setminus S\), assume that $\mathscr U$ is affine. We find that
\(A\) extends to an abelian scheme $\mathcal A$ over $\mathscr U$, i.e., 
\[\mathscr A\times_{\mathscr{U}} \op{Spec} K\simeq A.\]
Since \(\ell\neq p\), the finite group schemes \(\mathcal A[\ell^n]\) are
finite etale over \(\mathscr U\). The $\ell$-adic Tate-module $T_\ell(\mathcal A)=\varprojlim_n \mathcal A[\ell^n]$ is a lisse \(\mathbb Z_\ell\)-sheaf on \(\mathscr U\). In accordance with the notation from Section \ref{s 2.1},
we set \[\begin{split}&\mathscr T=T_\ell(\mathcal A),\\
&\mathscr W=\mathscr T\otimes_{\mathbb Z_\ell}\mathbb Q_\ell/\mathbb Z_\ell
        =A[\ell^\infty], \quad \text{and}\\&\mathscr M=\mathscr T/\ell\mathscr T=A[\ell].\\
        \end{split}\]
\par Let \(F/K\) be a finite separable algebraic extension and let \(w\) be a place of \(F\).
The usual Kummer sequence
\[
        0\longrightarrow A[\ell^n]
        \longrightarrow A
        \xrightarrow{\ell^n}
        A
        \longrightarrow 0
\]
gives a local connecting homomorphism
\[
        \kappa_{w}^{(n)}:A(F_w)/\ell^n A(F_w)
        \longrightarrow
        H^1(F_w,A[\ell^n]).
\]
Passing to the direct limit over \(n\), one obtains the local Kummer map
\[
        \kappa_w:
        A(F_w)\otimes_{\mathbb Z_\ell}\mathbb Q_\ell/\mathbb Z_\ell
        \longrightarrow
        H^1(F_w,A[\ell^\infty]).
\]
The usual local condition for the \(\ell^\infty\)-Selmer group is the image
\[
        \mathcal L_w(A[\ell^\infty])
        :=
        \operatorname{im}(\kappa_w)
        \subseteq
        H^1(F_w,A[\ell^\infty]).
\]
We shall set
\[
        \overline{\kappa}_{w}:=\kappa_w^{(1)}:
        A(F_w)/\ell A(F_w)
        \longrightarrow
        H^1(F_w,A[\ell]),
\]
and the residual local condition is
\[
        \mathcal L_w(A[\ell])
        :=
        \op{image}\left(\overline{\kappa}_w\right)
        \subseteq
        H^1(F_w,A[\ell]).
\]
\par With these local conditions, the global Selmer group over $F$ is defined as follows:
\[
\operatorname{Sel}_{\ell^\infty}(A/F)
=
\ker\left\{
H^1(G_S(F),A[\ell^\infty])
\longrightarrow
\prod_{w\in S_F}
\frac{H^1(F_w,A[\ell^\infty])}{\mathcal L_w(A[\ell^\infty])}
\right\}.
\]
Here \(S_F\) denotes the set of places of \(F\) above \(S\), and
\(G_S(F)\) is the Galois group of the maximal extension of \(F\) unramified
outside \(S_F\). We set
\[\op{Sel}_{\ell^\infty}(A/K_\infty):=\varinjlim_n \op{Sel}_{\ell^\infty}(A/K_n).\]
\begin{lemma}\label{lem:kummer-cartesian-local-condition}
Let \(F/K\) be a finite separable extension, let \(A/F\) be an abelian
variety, and let \(w\) be a place of \(F\). Put \(L=F_w\), and assume that
\(\ell\neq \operatorname{char}(F)\). Let
\[
        \beta_w:
        H^1(L,A[\ell])
        \longrightarrow
        H^1(L,A[\ell^\infty])[\ell]
\]
be the map induced by the inclusion \(A[\ell]\subseteq A[\ell^\infty]\).
Then the Kummer local conditions are cartesian with respect to multiplication
by \(\ell\), namely
\[
        \mathcal L_w(A[\ell])
        =
        \beta_w^{-1}
        \left(\mathcal L_w(A[\ell^\infty])\right).
\]
\end{lemma}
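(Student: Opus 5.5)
The plan is to compare the two Kummer sequences through a commutative diagram with exact rows. Multiplication by \(\ell\) on \(A(L)\) gives \(A(L)/\ell\hookrightarrow H^1(L,A[\ell])\twoheadrightarrow H^1(L,A)[\ell]\). Passing to the direct limit over \(\ell^n\) gives \(A(L)\otimes\mathbb Q_\ell/\mathbb Z_\ell\hookrightarrow H^1(L,A[\ell^\infty])\twoheadrightarrow H^1(L,A)[\ell^\infty]\). The vertical maps are:
\begin{itemize}
\item on the first term, \(x\mapsto x\otimes \ell^{-1}\), which identifies \(A(L)/\ell\) with \((A(L)\otimes\mathbb Q_\ell/\mathbb Z_\ell)[\ell]\);
\item in the middle, \(\beta_w\);
\item on the last term, the inclusion \(H^1(L,A)[\ell]\subseteq H^1(L,A)[\ell^\infty]\).
\end{itemize}
Commutativity holds because the Kummer map at level \(\ell^n\), composed with the map induced by \(A[\ell^n]\subseteq A[\ell^{n+1}]\), is the level-\(\ell^{n+1}\) Kummer map applied to \(\ell\cdot\) (the point). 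For \(n=1\) and the limit this yields \(\beta_w\circ\overline\kappa_w=\kappa_w\circ(x\mapsto x\otimes\ell^{-1})\). I will check this at the level of cocycles: for \(P\) and a choice of \(Q\) with \(\ell Q=P\), the class \(\sigma\mapsto\sigma Q-Q\) is the same cocycle in both pictures.

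The inclusion \(\mathcal L_w(A[\ell])\subseteq\beta_w^{-1}(\mathcal L_w(A[\ell^\infty]))\) follows directly from this commutativity. For the reverse inclusion, take \(c\in H^1(L,A[\ell])\) with \(\beta_w(c)\in\operatorname{im}\kappa_w\). Then the image of \(\beta_w(c)\) in \(H^1(L,A)[\ell^\infty]\) vanishes, so by commutativity the image of \(c\) in \(H^1(L,A)[\ell]\) maps to zero under the inclusion into \(H^1(L,A)[\ell^\infty]\). Since that map is injective, \(c\) maps to zero in \(H^1(L,A)[\ell]\), and exactness of the top row gives \(c\in\operatorname{im}\overline\kappa_w\).

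To make the argument clean, I will identify \(\mathcal L_w(A[\ell])\) with \(\ker\bigl(H^1(L,A[\ell])\to H^1(L,A)\bigr)\) and \(\mathcal L_w(A[\ell^\infty])\) with \(\ker\bigl(H^1(L,A[\ell^\infty])\to H^1(L,A)\bigr)\). The lemma then reduces to the functoriality of the map \(H^1(L,A[\ell])\to H^1(L,A[\ell^\infty])\to H^1(L,A)\), which is induced by the inclusions of sheaves \(A[\ell]\subseteq A[\ell^\infty]\subseteq A\). The second identification uses exactness of direct limits: \(\mathcal L_w(A[\ell^\infty])=\varinjlim_n\operatorname{im}\kappa_w^{(n)}\), and this equals the kernel of \(H^1(L,A[\ell^\infty])=\varinjlim_n H^1(L,A[\ell^n])\to H^1(L,A)\).

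The main obstacle is this limit identification and compatibility, including the fact that \(\ell\ne\operatorname{char}\) makes \(A\xrightarrow{\ell^n}A\) surjective on \(\bar L\)-points. Once these are in place, the two inclusions above are formal. Note that the lemma does not even need \(\beta_w\) to be injective; in fact \(\ker\beta_w=A(L)[\ell^\infty]/\ell\) lies in \(\operatorname{im}\overline\kappa_w\), consistent with the statement.
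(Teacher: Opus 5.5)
Your proposal is correct and follows essentially the same route as the paper. Both arguments identify the two Kummer conditions with $\ker\bigl(H^1(L,A[\ell])\to H^1(L,A)[\ell]\bigr)$ and $\ker\bigl(H^1(L,A[\ell^\infty])\to H^1(L,A)[\ell^\infty]\bigr)$, then conclude from the commutative square through $\beta_w$, the injectivity of $H^1(L,A)[\ell]\hookrightarrow H^1(L,A)[\ell^\infty]$, and exactness of the top row. Your cocycle-level check and the description of $\ker\beta_w$ are extra detail, not a different method.
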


\begin{proof}
The
sequence
\[
        0
        \longrightarrow
        A[\ell]
        \longrightarrow
        A
        \xrightarrow{\ell}
        A
        \longrightarrow
        0
\]
gives an exact sequence
\[
        0
        \longrightarrow
        A(L)/\ell A(L)
        \longrightarrow
        H^1(L,A[\ell])
        \xrightarrow{q_\ell}
        H^1(L,A)[\ell]
        \longrightarrow
        0.
\]
Thus $\mathcal L_w(A[\ell])
        =
        \ker(q_\ell)$. Taking the direct limit over the Kummer sequences for
\(A[\ell^n]\) gives an exact sequence
\[
        0
        \longrightarrow
        A(L)\otimes_{\mathbb Z}\mathbb Q_\ell/\mathbb Z_\ell
        \longrightarrow
        H^1(L,A[\ell^\infty])
        \xrightarrow{q_\infty}
        H^1(L,A)[\ell^\infty]
        \longrightarrow
        0,
\]
and hence $\mathcal L_w(A[\ell^\infty])
        =
        \ker(q_\infty)$.

The inclusion \(A[\ell]\subseteq A[\ell^\infty]\) is compatible with the
Kummer sequences.  Therefore we have a commutative diagram
\[
\begin{tikzcd}[column sep=large,row sep=large]
H^1(L,A[\ell])
    \arrow[r,"q_\ell"]
    \arrow[d,"\beta_w"]
&
H^1(L,A)[\ell]
    \arrow[d,hook]
\\
H^1(L,A[\ell^\infty])
    \arrow[r,"q_\infty"]
&
H^1(L,A)[\ell^\infty].
\end{tikzcd}
\]
Let \(x\in H^1(L,A[\ell])\).  Then
\[
        \beta_w(x)\in \mathcal L_w(A[\ell^\infty])[\ell]
\]
if and only if
\[
        q_\infty(\beta_w(x))=0.
\]
By commutativity of the diagram, this is equivalent to the image of
\(q_\ell(x)\) in \(H^1(L,A)[\ell^\infty]\) being zero.  Since the natural map
\[
        H^1(L,A)[\ell]\hookrightarrow H^1(L,A)[\ell^\infty]
\]
is injective, this is equivalent to
\[
        q_\ell(x)=0.
\]
But \(\ker(q_\ell)=\mathcal L_w(A[\ell])\).  Hence
\[
        \beta_w^{-1}
        \left(\mathcal L_w(A[\ell^\infty])[\ell]\right)
        =
        \mathcal L_w(A[\ell]),
\]
as required.
\end{proof}
\par Let \[\mathscr T_A:=T_\ell(\mathcal A):=\varprojlim_n \mathcal{A}/\ell^n\mathcal{A}\] be the $\ell$-adic lisse \(\mathbb Z_\ell\)-sheaf on \(\mathscr U\). We note that 
\[T_\ell(\mathcal A)_{\bar{\eta}}\simeq T_\ell(A),\] where $\bar{\eta}$ is the generic point of $\mathscr U$ and $T_\ell(A)$ is the $\ell$-adic Tate-module of $A$. For an
integer \(m\geq 0\) and any integer \(r\), we may form the Tate twist of the symmetric power:
\[
        \operatorname{Sym}^m \mathscr T_A(r)
        :=
        \operatorname{Sym}^m \mathscr T_A\otimes_{\mathbb Z_\ell}\mathbb Z_\ell(r).
\]
This is again lisse, since both \(\operatorname{Sym}^m \mathscr T_A\) and
\(\mathbb Z_\ell(r)\) are lisse \(\mathbb Z_\ell\)-sheaves. The two most
natural choices of Selmer structure are the standard unramified local
condition and the fine local condition at all primes. The latter is particularly important
for the adjoint sheaf, since it is the local condition naturally related to the
obstruction groups appearing in deformation theory.

\subsection{Fine Selmer groups}

\par We now specialize the preceding formalism to the fine Selmer structure.
Let \(F/K\) be a separable algebraic extension, and let \(S_F\) denote the set
of places of \(F\) lying above \(S\).  Let \(\mathscr W\) be a discrete
\(\ell\)-primary sheaf arising from a lisse \(\mathbb Z_\ell\)-sheaf
\(\mathscr T\) on \(\mathscr U\).  The fine Selmer group of $\Omega\in \{\mathscr W, \mathscr M\}$ over
\(F\), with respect to \(S\), is
\[
        R_S(\Omega/F)
        :=
        \ker\left\{
        H^1(G_S(F),\Omega)
        \longrightarrow
        \prod_{w\in S_F} H^1(F_w,\Omega)
        \right\}.
\]
Thus a global class belongs to the fine Selmer group precisely when it is
locally trivial at every place above \(S\). The fine Selmer
structure is the Selmer structure for which
\[
        \mathcal L_w(\Omega)=0
        \qquad
        \text{for every } w\in S_F.
\]
For the comparison with
\(\ell\)-torsion in \(R_S(\mathscr W/F)\), the natural residual local condition
is the cartesian one.  Namely, let
\[
        \beta_w:
        H^1(F_w,\mathscr M)
        \longrightarrow
        H^1(F_w,\mathscr W)[\ell]
\]
be the map induced by
\[
        0
        \longrightarrow
        \mathscr M
        \longrightarrow
        \mathscr W
        \xrightarrow{\ell}
        \mathscr W
        \longrightarrow 0.
\]
For the fine Selmer structure on \(\mathscr W\), the residual fine
local condition is
\[
        \mathcal L_w^{\mathrm{cart}}(\mathscr M)
        :=
       \beta_w^{-1}\left(\mathcal L_w(\mathscr W)\right)=\op{ker}\beta_w.
\]
We therefore define the residual cartesian fine Selmer group by
\[
        R_S^{\mathrm{cart}}(\mathscr M/F)
        :=
        \ker\left\{
        H^1(G_S(F),\mathscr M)
        \longrightarrow
        \prod_{w\in S_F}
        H^1(F_w,\mathscr W)[\ell]\right\}.
\]
Clearly, the fine Selmer structure is compatible with
multiplication by \(\ell\) in the sense of Assumption
\ref{assumption:cartesian-local-conditions}.

\begin{lemma}\label{lem:fine-comparison}
Let \(F/K\) be a separable algebraic extension. Then there is a surjection
\[
R_S^{\mathrm{cart}}(\mathscr M/F)
\twoheadrightarrow
R_S(\mathscr W/F)[\ell].
\]
Further, suppose that all primes of $S$ are finitely decomposed in $F$. Then if \(R_S(\mathscr M/F)\) is finite, then
\(R_S(\mathscr W/F)[\ell]\) is finite.
\end{lemma}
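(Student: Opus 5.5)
The first assertion is a special case of Lemma \ref{lem:lisse-selmer-comparison}. For the fine Selmer structure we have \(\mathcal L_w(\mathscr W)=0\), so the cartesian residual condition is \(\beta_w^{-1}(0)=\ker\beta_w=\mathcal L_w^{\mathrm{cart}}(\mathscr M)\). This is exactly the structure for which Assumption \ref{assumption:cartesian-local-conditions} holds. The residual Selmer group attached to this structure is \(R_S^{\mathrm{cart}}(\mathscr M/F)\), and the target of the comparison map is \(R_S(\mathscr W/F)[\ell]\). Hence Lemma \ref{lem:lisse-selmer-comparison} gives the surjection \(R_S^{\mathrm{cart}}(\mathscr M/F)\twoheadrightarrow R_S(\mathscr W/F)[\ell]\). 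That lemma is stated for an arbitrary separable \(F\subseteq K_S\), so no finiteness of \(F/K\) is needed. Concretely, one lifts \(c\in R_S(\mathscr W/F)[\ell]\) to \(\widetilde c\in H^1(G_S(F),\mathscr M)\) using \(\operatorname{im}\beta_F=H^1(G_S(F),\mathscr W)[\ell]\). Then \(\beta_w(\operatorname{loc}_w\widetilde c)=\operatorname{loc}_w c=0\), so \(\widetilde c\) lies in \(R_S^{\mathrm{cart}}(\mathscr M/F)\).

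For the finiteness statement, I compare \(R_S(\mathscr M/F)\subseteq R_S^{\mathrm{cart}}(\mathscr M/F)\). Consider the map
\[
R_S^{\mathrm{cart}}(\mathscr M/F)\longrightarrow \prod_{w\in S_F}\ker\beta_w, \qquad x\mapsto (\operatorname{loc}_w x)_w .
\]
Its kernel is exactly \(R_S(\mathscr M/F)\). The long exact sequence of \(0\to\mathscr M\to\mathscr W\xrightarrow{\ell}\mathscr W\to0\) over \(G_{F_w}\) identifies \(\ker\beta_w\) with the image of \(H^0(F_w,\mathscr W)/\ell\). This is the group \(\mathscr W(F_w)/\ell\mathscr W(F_w)\), a quotient of \(H^0(F_w,\mathscr W)\). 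The latter is a subgroup of \(W\simeq(\mathbb Q_\ell/\mathbb Z_\ell)^r\), so it has the form (divisible)\(\oplus\)(finite), with corank at most \(r\). Its quotient mod \(\ell\) is therefore finite, with \(\mathbb F_\ell\)-dimension at most \(r\). Indeed, \(\mathscr W(F_w)\) is a cofinitely generated \(\mathbb Z_\ell\)-module, and for such a module \(M\), \(M/\ell M\) is a quotient of \(M[\ell]\)-sized data bounded by \(\dim W[\ell]=r\).

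So each \(\ker\beta_w\) is finite. The hypothesis that every prime of \(S\) is finitely decomposed in \(F\) makes \(S_F\) finite, hence \(\prod_{w\in S_F}\ker\beta_w\) is finite. It follows that \(R_S(\mathscr M/F)\) has finite index in \(R_S^{\mathrm{cart}}(\mathscr M/F)\). If \(R_S(\mathscr M/F)\) is finite, then so is \(R_S^{\mathrm{cart}}(\mathscr M/F)\), and by the surjection of the first part so is \(R_S(\mathscr W/F)[\ell]\).

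The only step needing care is the finiteness of \(\mathscr W(F_w)/\ell\) when \(F_w\) is an infinite extension of \(K_v\), for instance when \(F=K_\infty\). I will handle it by the cofinite-generation argument above, which does not use finiteness of \(F_w/K_v\). The finite decomposition hypothesis is used only to keep the product over \(S_F\) finite.
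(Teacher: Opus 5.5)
Your proposal is correct and follows the paper's proof. You apply Lemma~\ref{lem:lisse-selmer-comparison} to the fine structure with residual condition $\ker\beta_w$, then use that $S_F$ is finite and each $\ker\beta_w\simeq H^0(F_w,\mathscr W)/\ell H^0(F_w,\mathscr W)$ is finite to pass from $R_S(\mathscr M/F)$ to $R_S^{\mathrm{cart}}(\mathscr M/F)$; you also correctly justify, via cofinite generation of $H^0(F_w,\mathscr W)\subseteq W$, a finiteness that the paper only asserts.
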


\begin{proof}
This is Lemma \ref{lem:lisse-selmer-comparison} applied to the fine Selmer
structure. Thus if \(R_S^{\mathrm{cart}}(\mathscr M/F)\) is finite, then
\(R_S(\mathscr W/F)[\ell]\) is finite. For each $w\in S_F$, 
\[\mathcal L_w^{\mathrm{cart}}(\mathscr M)\simeq \frac{H^0(F_w, \mathscr{W})}{\ell H^0(F_w, \mathscr{W})}\] is finite. By assumption, the set $S_F$ is finite. Therefore if \(R_S(\mathscr M/F)\) is finite, then so is \(R_S^{\mathrm{cart}}(\mathscr M/F)\). It follows that \(R_S(\mathscr W/F)[\ell]\) is finite as well.
\end{proof}

We now record the behavior of fine Selmer groups under enlargement of
\(S\). 
\begin{lemma}\label{lem:fine-selmer-independent-of-S}
Let \(S'\supset S\) be a finite set of closed points of \(\mathscr X\).
Then the natural map identifies the fine Selmer groups
\[
        R_S(\mathscr W/K_\infty)
        =
        R_{S'}(\mathscr W/K_\infty).
\]
\end{lemma}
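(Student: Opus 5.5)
We must show $R_S(\mathscr W/K_\infty)=R_{S'}(\mathscr W/K_\infty)$. There is a natural inflation map $H^1(G_S(K_\infty),\mathscr W)\to H^1(G_{S'}(K_\infty),\mathscr W)$, injective because $G_{S'}(K_\infty)\twoheadrightarrow G_S(K_\infty)$ (inflation--restriction). It sends $R_S$ into $R_{S'}$. Local triviality at places above $S$ is preserved because the localization maps are compatible. At places $w\mid S'\setminus S$, the class is unramified, since $\mathscr W$ is unramified there and the class comes from $G_S$; we must see that its restriction to $G_{K_{\infty,w}}$ vanishes.

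\textbf{Step 1: Unramified local cohomology vanishes at $v\in S'\setminus S$ over $K_\infty$.} Since $v\notin S$, $H^1_{\mathrm{ur}}(K_{\infty,w},\mathscr W)=H^1(G_{k_w}^{(\infty)},\mathscr W^{I_w})=H^1(\operatorname{Gal}(k_w^{\mathrm{sep}}/k_{\infty,w}),\mathscr W)$. Here $k_{\infty,w}$ is the residue field of $K_{\infty,w}$, which contains the $\mathbb Z_\ell$-extension of $k_v$. Its absolute Galois group is $\prod_{r\neq\ell}\mathbb Z_r$, which has pro-order prime to $\ell$. Because $\mathscr W$ is $\ell$-primary, this $H^1$ vanishes. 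Hence the image of any class of $R_S$ in $H^1(K_{\infty,w},\mathscr W)$ is zero, and $R_S(\mathscr W/K_\infty)\hookrightarrow R_{S'}(\mathscr W/K_\infty)$.

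\textbf{Step 2: Surjectivity.} Take $c\in R_{S'}(\mathscr W/K_\infty)$. For $w\mid S'\setminus S$, $c$ is locally trivial at $w$, so it is trivial on inertia $I_w$. Hence $c$ restricted to the kernel $N=\ker(G_{S'}(K_\infty)\to G_S(K_\infty))$ vanishes. This kernel is the closed normal subgroup generated by the inertia groups at places above $S'\setminus S$, and these act trivially on $\mathscr W$. By the inflation--restriction sequence
\[
0\to H^1(G_S(K_\infty),\mathscr W)\to H^1(G_{S'}(K_\infty),\mathscr W)\to H^1(N,\mathscr W)^{G_S(K_\infty)},
\]
$c$ therefore comes from $H^1(G_S(K_\infty),\mathscr W)$. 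Its localizations at places above $S$ agree with those of $c$, which are zero, so $c\in R_S$.

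The main subtle point is Step 1. One needs that every place above $S'\setminus S$ in $K_\infty$ has residue field containing the full constant $\mathbb Z_\ell$-tower, so that the residual Galois group has no $\ell$-part. This holds because $K_\infty/K$ is the constant extension: finite places have finite residue fields $k_v$, and $k_{\infty,w}=k_v k_\infty$ is the $\mathbb Z_\ell$-extension of $k_v$. The infinitely many places above a given $v$ cause no trouble, since the argument is placewise. If one prefers to work at finite level, the same conclusion follows by taking the direct limit over $K_n$, noting that unramified classes at level $n$ die in the limit.
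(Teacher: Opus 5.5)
Your proof is correct. Step 1 is exactly the paper's argument: a class unramified at a new place $w\mid v$ lies in $H^1(\operatorname{Gal}(\bar\kappa_w/\kappa_w),\mathscr W)$. This group vanishes because the residue field of $K_{\infty,w}$ contains the constant $\mathbb Z_\ell$-tower, so its absolute Galois group is $\prod_{r\neq\ell}\mathbb Z_r$.

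The paper stops at that point, which only shows that the inflation of $R_S(\mathscr W/K_\infty)$ lands in $R_{S'}(\mathscr W/K_\infty)$. Your Step 2 supplies the reverse inclusion, which the paper leaves implicit. It runs as follows. A class in $R_{S'}$ is trivial on every inertia group above $S'\setminus S$, since the local coboundary restricts to zero on a group acting trivially on $\mathscr W$. Its restriction to $N=\operatorname{Gal}(K_{S'}/K_S)$ is a conjugation-equivariant homomorphism, so it vanishes on the closed subgroup generated by all these inertia groups, and that subgroup is $N$. Inflation--restriction then finishes the argument. This is the step actually needed to turn the paper's argument into an equality.

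One small inaccuracy in your closing remark: no place is infinitely decomposed in the constant $\mathbb Z_\ell$-extension. The decomposition group of $v$ is $\deg(v)\mathbb Z_\ell$, which has finite index in $\Gamma$. As you say, the argument is placewise, so nothing depends on this.
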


\begin{proof}
It is enough to treat the case \(S'=S\cup\{v\}\), with \(v\notin S\).  Let
\(w\) be a place of \(K_\infty\) above \(v\).  Since \(\mathscr W\) is
unramified at \(v\), any class in \(H^1(G_S(K_\infty),\mathscr W)\) has
unramified localization at \(w\).  Therefore its localization lies in
\[
        H^1_{\mathrm{ur}}(K_{\infty,w},\mathscr W)
        =
        H^1\left(
        \operatorname{Gal}(\bar\kappa_w/\kappa_w),
        \mathscr W^{I_w}
        \right),
\]
where \(\kappa_w\) is the residue field at \(w\) and \(I_w\) is the inertia
group. Note that
\[
        \operatorname{Gal}(\bar\kappa_w/\kappa_w)
        \simeq
        \prod_{r\neq \ell}\mathbb Z_r
\]
has no nontrivial pro-\(\ell\) quotient. Since \(\mathscr W^{I_w}\) is a
discrete \(\ell\)-primary module, it follows that
\[
        H^1\left(
        \operatorname{Gal}(\bar\kappa_w/\kappa_w),
        \mathscr W^{I_w}
        \right)=0.
\]
Consequently any class which is unramified outside \(S\) is automatically
locally trivial at the newly added primes above \(v\). This proves the result.
\end{proof}
In light of the above result, we may assume that $S$ is large enough so that $\mathscr{U}=\mathscr X\setminus S$ is affine. We shall also drop the subscript from our notation for the fine Selmer group and simply write $R(\mathscr W/K_\infty)$.

\subsection{Iwasawa modules and invariants}
\par Let \(M_1\) and $M_2$ be finitely generated and torsion
\(\Lambda\)-modules. A homomorphism of $\Lambda$-modules $M_1\rightarrow M_2$ is called a pseudo-isomorphism if both its kernel and cokernel have finitely many elements. Let $M$ be a finitely generated $\Lambda$-module. Then there is a pseudoisomorphism $M\rightarrow M'$ where
\[M':=\Lambda^r\oplus \left(\bigoplus_{i=1}^t\Lambda/(\ell^{a_i})\right)\oplus \left(\bigoplus_{j=1}^s\Lambda/(f_j(T)^{b_j})\right),
\]
where $r\geq 0$, \(a_i,b_j\geq 1\), and where each \(f_j(T)\) is a distinguished polynomial (see \cite[Ch.~13]{washington}). This means that $f_j(T)$ is a monic polynomial whose nonleading coefficients are all divisible by $\ell$. The \(\mu\)-invariant is
\[
        \mu_\Lambda(M)=\sum_i a_i.
\]The $\mu$-invariant is $0$ precisely when $t=0$, i.e., when there are no summands of the form $\Lambda/(\ell^n)$ above. 

\begin{lemma}\label{lem:lisse-nakayama}
Let \(M\) be a finitely generated \(\Lambda\)-module. Then the following are equivalent:
\begin{enumerate}
    \item $M$ is torsion as a $\Lambda$-module, and $\mu_\Lambda(M)=0$;
    \item $M/\ell M$ is finite.
\end{enumerate}
\end{lemma}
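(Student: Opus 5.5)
The plan is to derive both implications from the structure theorem recalled just before the statement. Fix a pseudo-isomorphism $\phi: M\to M'$ with
\[
M'=\Lambda^r\oplus \Bigl(\bigoplus_{i=1}^t\Lambda/(\ell^{a_i})\Bigr)\oplus \Bigl(\bigoplus_{j=1}^s\Lambda/(f_j(T)^{b_j})\Bigr).
\]
The first step is to show that $M/\ell M$ is finite if and only if $M'/\ell M'$ is finite. Let $A=\ker\phi$ and $B=\operatorname{coker}\phi$; both are finite, and let $N=\phi(M)$. Tensoring the exact sequence $0\to A\to M\to N\to 0$ with $\Lambda/\ell$ gives an exact sequence $A/\ell A\to M/\ell M\to N/\ell N\to 0$. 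Since $A/\ell A$ is finite, $M/\ell M$ is finite if and only if $N/\ell N$ is finite. For $0\to N\to M'\to B\to 0$ we get the exact sequence $\operatorname{Tor}_1^{\Lambda}(\Lambda/\ell,B)=B[\ell]\to N/\ell N\to M'/\ell M'\to B/\ell B\to 0$, whose outer terms are finite. Hence $N/\ell N$ is finite if and only if $M'/\ell M'$ is finite.

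The second step is to compute $M'/\ell M'$ summand by summand:
\begin{itemize}
\item $\Lambda/\ell\Lambda\simeq \mathbb F_\ell[[T]]$, which is infinite.
\item $\Lambda/(\ell,\ell^{a_i})=\Lambda/\ell\simeq\mathbb F_\ell[[T]]$, which is also infinite because $a_i\ge1$.
\item $\Lambda/(\ell,f_j^{b_j})\simeq \mathbb F_\ell[[T]]/(T^{b_j\deg f_j})$, since $f_j$ is distinguished and so reduces to $T^{\deg f_j}$ modulo $\ell$. This quotient is finite, of order $\ell^{b_j\deg f_j}$.
\end{itemize}
Therefore $M'/\ell M'$ is finite exactly when $r=0$ and $t=0$.

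The conclusion follows from the definitions. The condition $r=0$ means $M$ is torsion, because pseudo-isomorphic modules have the same rank. Once $M$ is torsion, the condition $t=0$ means $\mu_\Lambda(M)=\sum_i a_i=0$. This gives (1)$\Leftrightarrow$(2).

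The only point needing care is the transfer of finiteness across the pseudo-isomorphism, which involves the $\operatorname{Tor}$ term; it is handled by the finiteness of $B[\ell]$. As an alternative route, topological Nakayama shows $M$ is generated over $\mathbb Z_\ell[[T]]$ by lifts of generators. One could then argue directly via $\Lambda/\ell\simeq\mathbb F_\ell[[T]]$, a PID: $M/\ell M$ is finite if and only if it is torsion over $\mathbb F_\ell[[T]]$. Nothing here goes beyond standard commutative algebra.
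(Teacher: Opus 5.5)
Your proposal is correct and follows the same route as the paper, which simply asserts that the lemma is an easy consequence of the structure theorem for finitely generated $\Lambda$-modules and the definition of $\mu_\Lambda$. You supply the details the paper omits, and they are sound: the transfer of finiteness of $M/\ell M$ across the pseudo-isomorphism uses the finiteness of $B[\ell]$ and $B/\ell B$, and the summand-by-summand computation shows $M'/\ell M'$ is finite exactly when $r=t=0$. The closing ``alternative route'' via topological Nakayama is unnecessary, since $M$ is already assumed finitely generated.
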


\begin{proof}
The result is an easy consequence of the above discussion. 
\end{proof}

Let
\[
        X_{\mathcal F}(\mathscr W/K_\infty)
        :=
        \operatorname{Sel}_{\mathcal F}(\mathscr W/K_\infty)^\vee
\]
be the Pontryagin dual of the Selmer group over $K_\infty$.

\begin{lemma}[Compact Nakayama lemma]
\label{lem:balister-howson-nakayama}
Let \(\mathcal R\) be a compact topological ring, and let
\(I\subseteq \mathcal R\) be a closed two-sided ideal such that
\(I^n\to 0\) as \(n\to\infty\), i.e. for every open neighbourhood \(U\) of
\(0\) in \(\mathcal R\), there exists \(n\) such that \(I^n\subseteq U\).
Let \(M\) be a compact \(\mathcal R\)-module.  If \(M/IM\) is finitely
generated as an \(\mathcal R/I\)-module, then \(M\) is finitely generated as an
\(\mathcal R\)-module.
\end{lemma}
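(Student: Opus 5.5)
We prove the compact Nakayama lemma by lifting generators of \(M/IM\) and running a successive-approximation argument that uses compactness of \(M\) and the hypothesis \(I^n\to 0\).

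\emph{Step 1: set-up.} Choose \(m_1,\dots,m_d\in M\) whose images generate \(M/IM\) over \(\mathcal R/I\). Let \(N\) be the \(\mathcal R\)-submodule generated by the \(m_i\). Then \(N\) is the image of the continuous map \(\mathcal R^d\to M\), \((r_i)\mapsto\sum r_i m_i\). Since \(\mathcal R^d\) is compact and \(M\) is Hausdorff, \(N\) is compact and hence closed in \(M\). By the choice of the \(m_i\) we have \(M=N+IM\).

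\emph{Step 2: iterate.} Substituting \(M=N+IM\) into itself gives \(IM\subseteq IN+I^2M\subseteq N+I^2M\). By induction,
\[
M=N+I^nM\qquad\text{for every } n\geq 1 .
\]
Here \(I^nM\) denotes the submodule generated by the products; for the closure argument below we may replace it by its closure, which is harmless.

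\emph{Step 3: \(I^nM\) shrinks to \(0\).} This is the main obstacle. We must show that for every open neighbourhood \(V\) of \(0\) in \(M\) there is an \(n\) with \(I^nM\subseteq V\), uniformly over \(M\).

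First we reduce to open submodules \(V\). In the intended application \(\mathcal R=\Lambda\) is profinite and \(M\) is a profinite module, so \(M\) has a basis of open neighbourhoods of \(0\) consisting of open submodules. In general one can use a basis of open subgroups when \(M\) is profinite.

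Now fix an open submodule \(V\) of \(M\). By continuity of the action map \(\mathcal R\times M\to M\), which sends \((0,m)\) to \(0\), and by compactness of \(M\), a tube-lemma argument gives an open neighbourhood \(U\) of \(0\) in \(\mathcal R\) such that \(U\cdot M\subseteq V\). By hypothesis there is an \(n\) with \(I^n\subseteq U\). Then every product \(xm\) with \(x\in I^n\) and \(m\in M\) lies in \(V\). Since \(V\) is a submodule, it follows that \(I^nM\subseteq V\).

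\emph{Step 4: conclude.} For every open submodule \(V\) we now have
\[
M=N+I^nM\subseteq N+V .
\]
Since \(N\) is closed, \(N=\bigcap_V (N+V)\), the intersection running over a neighbourhood basis of \(0\) consisting of open submodules. Each \(N+V\) equals \(M\), so \(M=N\). Hence \(M\) is generated by \(m_1,\dots,m_d\) and is finitely generated over \(\mathcal R\).
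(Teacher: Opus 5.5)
Your proof is correct in substance, but it takes a different route from the paper. The paper gives no argument for this lemma; it only cites Balister--Howson and Bandini--Longhi (Theorem 3.6). You instead give the standard self-contained successive-approximation proof:
\begin{enumerate}
\item lift generators and note that $N=\sum \mathcal R m_i$ is compact, hence closed;
\item iterate to get $M=N+I^nM$;
\item use the tube lemma to get $I^nM\subseteq V$ for small open $V$;
\item conclude that $M=\overline N=N$.
\end{enumerate}

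What your version buys is transparency, and it fully covers the only use of the lemma in the paper. There $\mathcal R=\Lambda$, $I=\mathfrak m_\Lambda$, and $M=X$ is the Pontryagin dual of a discrete $\ell$-primary group, hence profinite. What the citation buys is the full generality of the statement as written, where $M$ is an arbitrary compact Hausdorff module. Such an $M$ is not a priori known to have a basis of open subgroups at $0$. Your sentence ``in general one can use a basis of open subgroups when $M$ is profinite'' assumes this rather than proving it.

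The point is easy to close. For an open $V\ni 0$ in $M$, the tube lemma gives an open $U\ni 0$ in $\mathcal R$ with $U\cdot M\subseteq V$. Choose an open $U'\ni 0$ with $U'-U'\subseteq U$ and cover $\mathcal R$ by finitely many translates of $U'$. By pigeonhole, two multiples $a\cdot 1$ and $b\cdot 1$ with $a<b$ lie in the same translate, so $e\cdot 1\in U$ for $e=b-a\geq 1$. Hence $eM\subseteq V$. The identity component $M^0$ is a connected compact abelian group, so it is divisible, and therefore $M^0=eM^0\subseteq V$. Since this holds for every $V$, we get $M^0=0$, so $M$ is totally disconnected and has a basis of open subgroups.

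Note also that in Step 3 you only need $V$ to be an open \emph{subgroup}, not a submodule. Since $I^n$ is a left ideal, $I^nM$ is additively generated by the products $xm$ with $x\in I^n$, $m\in M$. So you never need to produce open submodules.
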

\begin{proof}
    This result is due to Balister and Howson \cite{balisterhowson}, see also \cite[Theorem 3.6]{bandinilonghi}
\end{proof}
We endow $\Lambda$ with its profinite topology. Its maximal ideal is $\mathfrak m_\Lambda=(\ell,T)$. By a compact \(\Lambda\)-module we mean a topological \(\Lambda\)-module whose
underlying topological space is compact and Hausdorff, and for which the
\(\Lambda\)-action is continuous. Equivalently, in the profinite setting, such
a module is an inverse limit of finite discrete \(\Lambda\)-modules. 
\begin{proposition}\label{cor:lisse-mu-criterion}
Assume that $\operatorname{Sel}_{\mathcal F}(\mathscr M/K_\infty)$ is finite, then
\[
        X_{\mathcal F}(\mathscr W/K_\infty)
        :=
        \operatorname{Sel}_{\mathcal F}(\mathscr W/K_\infty)^\vee
\]
is a finitely generated torsion \(\Lambda\)-module and
\[
        \mu_\Lambda\bigl(X_{\mathcal F}(\mathscr W/K_\infty)\bigr)=0.
\]
\end{proposition}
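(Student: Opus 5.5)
The plan is to chain Lemma \ref{lem:lisse-selmer-comparison} with Pontryagin duality, the compact Nakayama lemma (Lemma \ref{lem:balister-howson-nakayama}) and Lemma \ref{lem:lisse-nakayama}. First, apply Lemma \ref{lem:lisse-selmer-comparison} with \(F=K_\infty\). Note that this lemma was proved for an arbitrary separable extension \(F\subseteq K_S\), so it applies directly over \(K_\infty\). Since \(\operatorname{Sel}_{\mathcal F}(\mathscr M/K_\infty)\) is finite by hypothesis, the surjection gives that \(\operatorname{Sel}_{\mathcal F}(\mathscr W/K_\infty)[\ell]\) is finite.

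Next, pass to duals. The group \(\operatorname{Sel}_{\mathcal F}(\mathscr W/K_\infty)\) is a discrete \(\ell\)-primary torsion \(\Lambda\)-module. Its \(\Lambda\)-module structure comes from the action of \(\Gamma=\operatorname{Gal}(K_\infty/K)\), which is well defined because \(G_S(K_\infty)\) is normal in \(G_S(K)\) and the local conditions at places of \(S_{K_\infty}\) are permuted compatibly. It is moreover \(\mathfrak m_\Lambda\)-power torsion in the appropriate sense, since each element is fixed by an open subgroup of \(\Gamma\) and killed by a power of \(\ell\). Hence its Pontryagin dual \(X:=X_{\mathcal F}(\mathscr W/K_\infty)\) is a compact \(\Lambda\)-module. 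Pontryagin duality exchanges \(\ell\)-torsion and \(\ell\)-cotorsion, giving
\[
X/\ell X\simeq \bigl(\operatorname{Sel}_{\mathcal F}(\mathscr W/K_\infty)[\ell]\bigr)^\vee,
\]
which is finite.

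Then apply Lemma \ref{lem:balister-howson-nakayama} with \(\mathcal R=\Lambda\) and \(I=\mathfrak m_\Lambda=(\ell,T)\). Powers of \(\mathfrak m_\Lambda\) tend to \(0\) in the profinite topology of \(\Lambda\). The quotient \(X/\mathfrak m_\Lambda X\) is a quotient of the finite group \(X/\ell X\), so it is finite and in particular finitely generated over \(\Lambda/\mathfrak m_\Lambda=\mathbb F_\ell\). The lemma therefore shows that \(X\) is finitely generated over \(\Lambda\). With \(X\) finitely generated and \(X/\ell X\) finite, Lemma \ref{lem:lisse-nakayama} gives that \(X\) is \(\Lambda\)-torsion with \(\mu_\Lambda(X)=0\).

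The only step needing real care is the duality step: checking that \(X\) is a compact \(\Lambda\)-module with continuous action, and that \((A[\ell])^\vee\simeq A^\vee/\ell A^\vee\) for a discrete torsion module \(A\). The second point follows by dualizing the exact sequence \(0\to A[\ell]\to A\xrightarrow{\ell}A\).
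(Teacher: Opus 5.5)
Your proof is correct and follows the paper's argument step for step. Lemma~\ref{lem:lisse-selmer-comparison} gives finiteness of $\operatorname{Sel}_{\mathcal F}(\mathscr W/K_\infty)[\ell]$, duality then makes $X/\ell X$ finite, the compact Nakayama lemma with $I=\mathfrak m_\Lambda$ gives finite generation, and Lemma~\ref{lem:lisse-nakayama} finishes; your explicit check that $(A[\ell])^\vee\simeq A^\vee/\ell A^\vee$ simply fills in a step the paper leaves implicit.
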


\begin{proof}
 We set $X:=X_{\mathcal F}(\mathscr W/K_\infty)$.
Since
\(\operatorname{Sel}_{\mathcal F}(\mathscr W/K_\infty)\) is a discrete
\(\ell\)-primary group with a continuous \(\Gamma\)-action, its Pontryagin dual
\(X\) is a compact \(\Lambda\)-module. By Lemma \ref{lem:lisse-selmer-comparison}, $\operatorname{Sel}_{\mathcal F}(\mathscr W/K_\infty)[\ell]$
is finite and therefore $X/\ell X$ is finite. According to Lemma \ref{lem:lisse-nakayama} it suffices therefore to show that $X$ is finitely generated as a $\Lambda$-module. Since $X/\mathfrak m_\Lambda X$ is a quotient of \(X/\ell X\), it is finite.  Hence Lemma \ref{lem:balister-howson-nakayama}, applied with
\(I=\mathfrak m_\Lambda\), implies that \(X\) is finitely generated as a
\(\Lambda\)-module. This proves the result.
\end{proof}

\section{Greenberg Selmer groups for ordinary lisse sheaves}
\label{sec:greenberg-selmer}

\par \par The general sheaf-theoretic framework developed in this article allows one to impose a variety of local conditions and thereby construct Selmer groups beyond the standard unramified and fine Selmer groups considered above. An important example is provided by ordinary lisse sheaves, for which a suitable local filtration gives rise to Greenberg local conditions. We briefly describe this construction and illustrate it using the ordinary filtrations on symmetric powers of Tate modules of abelian varieties and their symmetric powers.
 \par Let \(K\) be a global field, let \(S\) be a finite set of places of
\(K\), and let \(\mathscr U\) denote the complement of \(S\) in the
corresponding arithmetic curve. Let $\mathscr T$
be a lisse \(\mathbb Z_\ell\)-sheaf on \(\mathscr U\), and let $T:=\mathscr T_{\bar\eta}$
be its geometric generic stalk. Thus \(T\) is a finite free \(\mathbb Z_\ell\)-module equipped with a continuous action of
\(G_S(K)\). We set
\[
        W
        :=
        T\otimes_{\mathbb Z_\ell}
        \mathbb Q_\ell/\mathbb Z_\ell.
\]

\subsection{Ordinary filtrations}

\par Let \(\Sigma_{\mathrm{ord}}\subseteq S\) be a distinguished set of
places. For every \(v\in\Sigma_{\mathrm{ord}}\), suppose there is a short exact sequence of
$G_{F_v}$-modules:
\[
        0
        \longrightarrow T_v^+
        \longrightarrow T
        \longrightarrow T_v^-
        \longrightarrow 0
\]
with \(T_v^\pm\) finite free and assume that $T_v^-$ is unramified. We refer to \(T_v^+\)
and \(T_v^-\) as the plus and minus parts of \(T\) at \(v\). We say that \(\mathscr T\) is ordinary
with respect to \(\Sigma_{\mathrm{ord}}\) if it is ordinary at every
\(v\in\Sigma_{\mathrm{ord}}\). Tensoring with
\(\mathbb Q_\ell/\mathbb Z_\ell\) gives an exact sequence
\[
        0
        \longrightarrow W_v^+
        \longrightarrow W
        \longrightarrow W_v^-
        \longrightarrow 0,
\]
where
\[
        W_v^\pm
        :=
        T_v^\pm\otimes_{\mathbb Z_\ell}
        \mathbb Q_\ell/\mathbb Z_\ell.
\]

\subsection{Greenberg local conditions}

\par Let \(L/K\) be a finite extension contained in \(K_S\), and let
\(w\) be a place of \(L\) lying above \(v\). If
\(v\in\Sigma_{\mathrm{ord}}\), we regard the ordinary filtration at
\(v\) as a filtration of \(D_w\)-modules by restriction.

\begin{definition}
\label{def:greenberg-local-condition}
For \(w\mid v\), with \(v\in\Sigma_{\mathrm{ord}}\), the Greenberg local
condition is
\[
        H^1_{\mathrm{Gr}}(L_w,W)
        :=
        \ker\left\{
        H^1(L_w,W)
        \longrightarrow
        H^1(I_w,W_v^-)
        \right\}.
\]
Equivalently, it is the inverse image of the unramified subgroup
\[
        H^1_{\mathrm{ur}}(L_w,W_v^-)
        :=
        \ker\left\{
        H^1(L_w,W_v^-)
        \longrightarrow
        H^1(I_w,W_v^-)
        \right\}
\]
under the natural map
\[
        H^1(L_w,W)
        \longrightarrow
        H^1(L_w,W_v^-).
\]
\end{definition}

\par At a place \(w\) not lying above
\(\Sigma_{\mathrm{ord}}\), we use the unramified local condition
\[
        H^1_{\mathrm{ur}}(L_w,W)
        :=
        \ker\left\{
        H^1(L_w,W)
        \longrightarrow
        H^1(I_w,W)
        \right\}.
\]
More generally, the unramified condition at such places may be replaced
by any compatible collection of local conditions.

\begin{definition}
\label{def:greenberg-selmer}
The Greenberg Selmer group of \(W\) over \(L\) is
\[
\begin{split}
        \operatorname{Sel}_{\mathrm{Gr}}(W/L)
        :=
        \ker\Bigg\{
        H^1(G_S(L),W)
        &\longrightarrow
        \prod_{\substack{w\in S_L\\
                         w\mid\Sigma_{\mathrm{ord}}}}
        \frac{H^1(L_w,W)}
             {H^1_{\mathrm{Gr}}(L_w,W)}
\\
        &\qquad\times
        \prod_{\substack{w\in S_L\\
                         w\nmid\Sigma_{\mathrm{ord}}}}
        \frac{H^1(L_w,W)}
             {H^1_{\mathrm{ur}}(L_w,W)}
        \Bigg\}.
\end{split}
\]
\end{definition}

\par If \(K_\infty/K\) is an infinite Galois extension contained in
\(K_S\), we define
\[
        \operatorname{Sel}_{\mathrm{Gr}}(W/K_\infty)
        :=
        \varinjlim_{L\subseteq K_\infty}
        \operatorname{Sel}_{\mathrm{Gr}}(W/L),
\]
where \(L\) ranges over the finite extensions of \(K\) contained in
\(K_\infty\). Its Pontryagin dual is denoted by
\[
        X_{\mathrm{Gr}}(T/K_\infty)
        :=
        \operatorname{Sel}_{\mathrm{Gr}}(W/K_\infty)^\vee.
\]

\subsection{Ordinary abelian varieties}

\par Let \(A/K\) be an abelian variety of dimension \(g\), and suppose
that \(A\) has good ordinary reduction at every place
\(v\in\Sigma_{\mathrm{ord}}\) lying above \(\ell\). The
connected--\'etale sequence of its \(\ell\)-divisible group induces a
\(D_v\)-stable exact sequence
\[
        0
        \longrightarrow T_{\ell,v}^+(A)
        \longrightarrow T_\ell(A)
        \longrightarrow T_{\ell,v}^-(A)
        \longrightarrow 0.
\]
Here
\[
        \operatorname{rank}_{\mathbb Z_\ell}
        T_{\ell,v}^+(A)
        =
        \operatorname{rank}_{\mathbb Z_\ell}
        T_{\ell,v}^-(A)
        =g,
\]
and the action of \(I_v\) on \(T_{\ell,v}^-(A)\) is trivial. Thus
\(T_\ell(A)\) is ordinary in the above sense.

\par Writing
\[
        A[\ell^\infty]
        =
        T_\ell(A)\otimes_{\mathbb Z_\ell}
        \mathbb Q_\ell/\mathbb Z_\ell,
\]
the ordinary quotient is
\[
        A[\ell^\infty]_v^-
        :=
        T_{\ell,v}^-(A)
        \otimes_{\mathbb Z_\ell}
        \mathbb Q_\ell/\mathbb Z_\ell.
\]
The Greenberg local condition at \(w\mid v\) is therefore
\[
        H^1_{\mathrm{Gr}}
        \bigl(L_w,A[\ell^\infty]\bigr)
        =
        \ker\left\{
        H^1\bigl(L_w,A[\ell^\infty]\bigr)
        \longrightarrow
        H^1\bigl(I_w,A[\ell^\infty]_v^-\bigr)
        \right\}.
\]

\subsection{Symmetric powers}

\par We now describe the induced ordinary filtration on symmetric
powers. Let \(m\geq 1\) and \(r\in\mathbb Z\), and put
\[
        T_{m,r}
        :=
        \operatorname{Sym}^m T_\ell(A)(r).
\]
The quotient map
\[
        T_\ell(A)
        \longrightarrow
        T_{\ell,v}^-(A)
\]
induces a surjection
\[
        \operatorname{Sym}^m T_\ell(A)(r)
        \longrightarrow
        \operatorname{Sym}^m T_{\ell,v}^-(A)(r).
\]
We define
\[
        T_{m,r,v}^-
        :=
        \operatorname{Sym}^m T_{\ell,v}^-(A)(r)
\]
and
\[
        T_{m,r,v}^+
        :=
        \ker\left\{
        \operatorname{Sym}^m T_\ell(A)(r)
        \longrightarrow
        \operatorname{Sym}^m T_{\ell,v}^-(A)(r)
        \right\}.
\]
This gives an exact sequence
\[
        0
        \longrightarrow T_{m,r,v}^+
        \longrightarrow T_{m,r}
        \longrightarrow T_{m,r,v}^-
        \longrightarrow 0.
\]
Consequently, every symmetric power \(T_{m,r}\) is ordinary with
respect to the induced filtration.

\par Set
\[
        W_{m,r}
        :=
        T_{m,r}\otimes_{\mathbb Z_\ell}
        \mathbb Q_\ell/\mathbb Z_\ell
\]
and
\[
        W_{m,r,v}^-
        :=
        T_{m,r,v}^-\otimes_{\mathbb Z_\ell}
        \mathbb Q_\ell/\mathbb Z_\ell.
\]
The Greenberg local condition associated with the symmetric power is
\[
        H^1_{\mathrm{Gr}}(L_w,W_{m,r})
        :=
        \ker\left\{
        H^1(L_w,W_{m,r})
        \longrightarrow
        H^1(I_w,W_{m,r,v}^-)
        \right\}.
\]
The corresponding global Selmer group is denoted by $\operatorname{Sel}_{\mathrm{Gr}}
        \bigl(W_{m,r}/L\bigr)$.
        \subsection{Adjoint Selmer groups}

\par Let \(\mathscr T\) be a rank-two lisse
\(\mathbb Z_\ell\)-sheaf. The adjoint sheaf is
\[
        \operatorname{Ad}(\mathscr T)
        :=
        \operatorname{End}_{\mathbb Z_\ell}(\mathscr T),
\]
with Galois action given by conjugation. Its trace-zero subrepresentation
is denoted by
\[
        \operatorname{Ad}^0(\mathscr T)
        :=
        \ker\left(
        \operatorname{tr}:
        \operatorname{End}_{\mathbb Z_\ell}(\mathscr T)
        \longrightarrow
        \mathbb Z_\ell
        \right).
\]
If \(\ell\neq 2\), then
\[
        \operatorname{Ad}^0(\mathscr T)
        \simeq
        \operatorname{Sym}^2\mathscr T
        \otimes_{\mathbb Z_\ell}
        \det(\mathscr T)^{-1}.
\]
Thus the Greenberg Selmer group associated with the trace-zero adjoint
sheaf is a special case of the symmetric-power Selmer groups considered
above. In particular, for an elliptic curve \(E\),
\[
        \operatorname{Ad}^0(T_\ell(E))
        \simeq
        \operatorname{Sym}^2T_\ell(E)(-1).
\]

\section{Vanishing of the $\mu$-invariant}

\par We first record a simple cohomological lemma.

\begin{lemma}\label{lem:lisse-no-pro-l}
Let \(N\) be a finite \(\ell\)-primary discrete \(G_{k_\infty}\)-module.  Then
\[
        H^a(G_{k_\infty},N)=0
        \qquad
        \text{for all }a>0.
\]
\end{lemma}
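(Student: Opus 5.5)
The approach is to use the structure of $G_{k_\infty}=\operatorname{Gal}(\bar k/k_\infty)\simeq \prod_{r\neq\ell}\mathbb Z_r$, which is procyclic of supernatural order prime to $\ell$. The point is that profinite group cohomology with coefficients in an $\ell$-primary module is controlled by the $\ell$-Sylow subgroup, and here that Sylow subgroup is trivial.

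First, reduce to the case where the action factors through a finite quotient. Since $N$ is finite and discrete, the stabilizer of every element is open. So $N$ is a module for a finite quotient $G_{k_\infty}/U$, and
\[
H^a(G_{k_\infty},N)=\varinjlim_{V\subseteq U} H^a(G_{k_\infty}/V,N),
\]
the limit running over open normal subgroups $V\subseteq U$.

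Second, each such quotient $G_{k_\infty}/V$ is a finite cyclic group whose order is prime to $\ell$. This holds because every finite quotient of $\prod_{r\neq\ell}\mathbb Z_r$ has order divisible only by primes $r\neq\ell$. For a finite group $H$ of order $h$, the group $H^a(H,N)$ with $a>0$ is killed by $h$, by restriction--corestriction to the trivial subgroup. It is also killed by a power of $\ell$, since $N$ is $\ell$-primary. Because $\gcd(h,\ell)=1$, it follows that $H^a(H,N)=0$ for all $a>0$.

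Passing to the direct limit then gives $H^a(G_{k_\infty},N)=0$ for $a>0$. Alternatively, one can say that $G_{k_\infty}$ has $\ell$-cohomological dimension $0$, since its $\ell$-Sylow subgroup is trivial, and invoke Serre's result $\operatorname{cd}_\ell(G)=\operatorname{cd}_\ell(G_\ell)$.

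There is no real obstacle here. The only point needing care is the identification $G_{k_\infty}\simeq\prod_{r\neq\ell}\mathbb Z_r$, recorded earlier in \S\ref{s 2.1}, together with the observation that all finite quotients of this group have order prime to $\ell$.
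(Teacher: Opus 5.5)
Your proof is correct and follows essentially the same route as the paper. Both arguments write $H^a(G_{k_\infty},N)$ as a direct limit over finite quotients $Q$ of $G_{k_\infty}\simeq\prod_{r\neq\ell}\mathbb Z_r$, which have order prime to $\ell$, and use restriction--corestriction to see that $|Q|$ both annihilates $H^a(Q,N)$ and acts invertibly on it, which forces $H^a(Q,N)=0$. Your alternative via $\operatorname{cd}_\ell(G)=\operatorname{cd}_\ell(G_\ell)$ is a valid shortcut but is not needed.
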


\begin{proof}
Since \(k_\infty/k\) is the constant \(\mathbb Z_\ell\)-extension, we have
\[
        G_{k_\infty}
        =
        \operatorname{Gal}(\bar k/k_\infty)
        \simeq
        \prod_{r\neq \ell}\mathbb Z_r.
\]
Thus \(G_{k_\infty}\) is a pro-prime-to-\(\ell\) group.  In particular, every
finite quotient \(Q\) of \(G_{k_\infty}\) has order prime to \(\ell\).
\par Since \(N\) is finite and discrete, continuous cochains of
\(G_{k_\infty}\) with values in \(N\) factor through finite quotients of
\(G_{k_\infty}\). Hence
\[
        H^a(G_{k_\infty},N)
        =
        \varinjlim_{U}
        H^a(G_{k_\infty}/U,N),
\]
where \(U\) ranges over sufficiently small open normal subgroups of
\(G_{k_\infty}\) acting trivially on \(N\).

\par Let $Q=G_{k_\infty}/U$ be one such finite quotient.  Then \(|Q|\) is prime to \(\ell\).  By the
restriction--corestriction formula, if \(H\subseteq Q\) is a subgroup, then
\[
        \operatorname{Cor}_{H}^{Q}\circ
        \operatorname{Res}_{H}^{Q}
        =
        [Q:H]
\]
on \(H^a(Q,N)\); see Serre \cite[p.~12]{Serre}.  Taking \(H=\{1\}\), and using
that
\[
        H^a(\{1\},N)=0
        \qquad
        \text{for }a>0,
\]
we find that multiplication by \(|Q|\) annihilates \(H^a(Q,N)\).

On the other hand, \(N\) is \(\ell\)-primary and \(|Q|\) is prime to \(\ell\).
Therefore multiplication by \(|Q|\) is an automorphism of \(N\), and hence of
\(H^a(Q,N)\). Hence
\[
        H^a(Q,N)=0
        \qquad
        \text{for all }a>0.
\]
Passing to the direct limit over finite quotients \(Q\), we get
\[
        H^a(G_{k_\infty},N)=0
        \qquad
        \text{for all }a>0.
\]
\end{proof}
\begin{theorem}
\label{thm:stacks-torsion-curves}
Let \(k_0\) be an algebraically closed field, and let \(Y\) be a separated
scheme of finite type over \(k_0\) with
\[
        \dim Y\leq 1.
\]
Let \(\mathscr F\) be a torsion abelian sheaf on \(Y_{\mathrm{et}}\).  Then the
following assertions hold:
\begin{enumerate}
    \item One has
    \[
            H^i_{\mathrm{et}}(Y,\mathscr F)=0
            \qquad
            \text{for all } i>2.
    \]

    \item If \(Y\) is affine, then
    \[
            H^i_{\mathrm{et}}(Y,\mathscr F)=0
            \qquad
            \text{for all } i>1.
    \]

    \item If \(p=\operatorname{char}(k_0)>0\) and \(\mathscr F\) is
    \(p\)-primary torsion, then
    \[
            H^i_{\mathrm{et}}(Y,\mathscr F)=0
            \qquad
            \text{for all } i>1.
    \]

    \item If \(\mathscr F\) is constructible and torsion prime to
    \(\operatorname{char}(k_0)\), then $H^i_{\mathrm{et}}(Y,\mathscr F)$ is finite for every \(i\geq 0\).

    \item If \(Y\) is proper over \(k_0\) and \(\mathscr F\) is constructible,
    then $H^i_{\mathrm{et}}(Y,\mathscr F)$ is finite for every \(i\geq 0\).
\end{enumerate}
\end{theorem}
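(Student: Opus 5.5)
This theorem collects standard facts from SGA~4 and SGA~4\(\tfrac12\), as presented in the Stacks Project chapter on étale cohomology of curves. I would prove it by reduction to those sources rather than from scratch, organising the argument by the dimension of \(Y\) and by whether the torsion is prime to \(p\) or \(p\)-primary.

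\emph{Reductions.} Étale cohomology is insensitive to nilpotents, so we may replace \(Y\) by \(Y_{\mathrm{red}}\). Let \(\nu\colon\widetilde Y\to Y\) be the normalization; it is finite, and it is an isomorphism over a dense open \(V\subseteq Y\) with finite complement \(Z\). The sequence
\[
0\to\mathscr F\to\nu_*\nu^*\mathscr F\to\mathscr Q\to 0
\]
has \(\mathscr Q\) supported on the finite set \(Z\) of \(k_0\)-points. Cohomology of a sheaf supported on finitely many closed points of an algebraically closed field vanishes in positive degrees, and it is finite in degree \(0\) when the sheaf is constructible. Since \(\nu\) is finite, \(\nu_*\) is exact, so \(H^i(Y,\nu_*\nu^*\mathscr F)=H^i(\widetilde Y,\nu^*\mathscr F)\). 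Moreover \(\widetilde Y\) is affine, respectively proper, whenever \(Y\) is. Thus all five assertions reduce to the case of a smooth curve, together with the zero-dimensional case, which is trivial. For a smooth curve we may further write \(\mathscr F\) as a filtered colimit of constructible sheaves, since étale cohomology commutes with filtered colimits on qcqs schemes. This handles (1)--(3), because vanishing is preserved under colimits.

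\emph{Prime-to-\(p\) torsion.} For constructible \(\mathscr F\) on a smooth curve, choose a dense open \(j\colon U\hookrightarrow Y\) on which \(\mathscr F\) is locally constant. The sequence \(0\to j_!j^*\mathscr F\to\mathscr F\to i_*i^*\mathscr F\to0\) reduces everything to \(j_!\) of a finite locally constant sheaf, plus point-supported sheaves. After a finite étale cover trivialising the sheaf and a trace argument (or by filtering), we reduce to the constant sheaf \(\mu_n\). The Kummer sequence together with \(\operatorname{Pic}(Y)\) then gives finiteness and the vanishing above degree \(2\), respectively above degree \(1\) when \(Y\) is affine, because \(\operatorname{Pic}\) of an affine curve is divisible. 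The same argument computes the cohomology of the compactification, which handles the \(j_!\) term.

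\emph{\(p\)-primary torsion.} Here the tool is the Artin--Schreier sequence \(0\to\mathbb F_p\to\mathcal O\xrightarrow{F-1}\mathcal O\to0\). On the affine pieces, \(H^i(Y,\mathcal O)=0\) for \(i\geq2\), and \(H^1(Y,\mathcal O)=0\) in the affine case, which gives (3). For general \(Y\) we also use that the cohomological dimension of a curve for \(p\)-torsion is at most \(1\), by Gabber/SGA~4 X~5.

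\emph{General torsion.} A torsion sheaf decomposes into its prime-to-\(p\) part and its \(p\)-primary part, so combining the two cases yields (1) and (2).

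\emph{Proper case (5).} For \(p\)-torsion constructible sheaves on a proper curve, finiteness follows from the finiteness of coherent cohomology together with the Artin--Schreier sequence. Since \(F-1\) is surjective on \(H^1(Y,\mathcal O)\) over an algebraically closed field, we get \(H^1(Y,\mathbb F_p)\) finite; dévissage then handles general constructible \(\mathscr F\).

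The main obstacle is the affine vanishing in the \(p\)-primary case for non-constant constructible sheaves. For this I would follow Stacks, Tag 0A3R and its neighbours, which push forward along a finite morphism to reduce to \(\mathbb F_p\). In the paper itself I would simply cite \cite[Chapter 59]{Stacks}, since these are exactly the cited lemmas on curves.
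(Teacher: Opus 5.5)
You end up where the paper does. The paper's entire proof is a citation of \cite[Theorem 59.83.10]{Stacks}, which is this statement verbatim, and you also defer to Chapter 59 of the Stacks Project. What you add is a sketch of the argument behind that theorem. You pass to $Y_{\mathrm{red}}$ and its normalization, where the cokernel of $\mathscr F\to\nu_*\nu^*\mathscr F$ lives on finitely many $k_0$-points. You then run a d\'evissage via $0\to j_!j^*\mathscr F\to\mathscr F\to i_*i^*\mathscr F\to 0$ and a trace argument down to constant coefficients. Finally you use Kummer for prime-to-$p$ torsion and Artin--Schreier for $p$-torsion. That is the right skeleton, and it shows where affineness, properness and the type of torsion enter, which the bare citation hides. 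The citation, for its part, is short and complete.

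If you write the sketch out, a few steps need repair.

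(a) On a smooth curve the Kummer sequence gives $0\to\operatorname{Pic}(Y)/n\to H^2(Y,\mu_n)\to H^2(Y,\mathbb{G}_m)[n]\to 0$. So divisibility of $\operatorname{Pic}$ of an affine curve only kills the first term. The essential extra input, which you never state, is $H^q(Y,\mathbb{G}_m)=0$ for $q\geq 2$ (this rests on Tsen's theorem). It is also what gives vanishing in degrees $\geq 3$.

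(b) Surjectivity of $F-1$ on $H^1(Y,\mathcal O)$ for a proper smooth curve is what gives $H^2(Y,\mathbb F_p)=0$. It is therefore the input for (3) on the proper components of $\widetilde Y$ (each component is proper or affine). Use it there instead of quoting $\operatorname{cd}_p\leq 1$, which is (3) itself. It does not give finiteness of $H^1(Y,\mathbb F_p)$ in (5). That finiteness comes from $\operatorname{coker}(F-1\mid H^0(Y,\mathcal O))=0$ together with finiteness of $\ker(F-1\mid H^1(Y,\mathcal O))$. The latter holds because the fixed points of a Frobenius-semilinear endomorphism of a finite-dimensional $k_0$-space form an $\mathbb F_p$-space of at most the same dimension.

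(c) The trace argument needs a cover of degree prime to the relevant prime $\ell$ (possibly $\ell=p$). Divide a Galois trivializing cover by an $\ell$-Sylow subgroup; the pullback is then a successive extension of copies of $\mathbb Z/\ell$. To treat the $j_!$-term, extend that cover $f\colon U'\to U$ to the normalization $\bar f\colon Y'\to Y$ of $Y$ in $U'$, and use $j_!f_*=\bar f_*j'_!$. Here $Y'$ is finite over $Y$, hence affine when $Y$ is. On a projective compactification of $U$ you would instead be computing $H^*_c(U)$, whose $H^2$ with prime-to-$p$ coefficients is nonzero. With (c) in place, the non-constant $p$-torsion case you single out as the main obstacle is no harder than the constant one.
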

\begin{proof}
    The result follows from \cite[Theorem 59.83.10]{Stacks}.
\end{proof}

\begin{proposition}\label{prop:lisse-residual-finite}
Let \(\mathscr M\) be a constructible \(\mathbb F_\ell\)-sheaf on
\(\mathscr U\). Set $\mathscr U_\infty=\mathscr U\times_k k_\infty$ and $\mathscr U_{\bar k}=\mathscr U\times_k \bar k$. Then, for every \(i\geq 0\), there is a canonical isomorphism
\[
        H^i_{\mathrm{et}}(\mathscr U_\infty,\mathscr M)
        \simeq
        H^i_{\mathrm{et}}(\mathscr U_{\bar k},\mathscr M)^{G_{k_\infty}}.
\]
In particular, $H^i_{\mathrm{et}}(\mathscr U_\infty,\mathscr M)$ is finite for every \(i\geq 0\).
\end{proposition}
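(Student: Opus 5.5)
The approach is a Hochschild--Serre spectral sequence for the pro-\'etale Galois cover \(\mathscr U_{\bar k}\to\mathscr U_\infty\), whose higher terms will vanish by Lemma \ref{lem:lisse-no-pro-l}. The finiteness statement then comes from Theorem \ref{thm:stacks-torsion-curves}.

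First, write \(\mathscr U_\infty=\varprojlim_n \mathscr U\times_k \mathbb F_{q^{\ell^n}}\) and \(\mathscr U_{\bar k}=\varprojlim_{k'}\mathscr U\times_k k'\), where \(k'\) runs over the finite extensions of \(k_\infty\) inside \(\bar k\). Then \(\mathscr U_{\bar k}\to\mathscr U_\infty\) is a limit of finite Galois \'etale covers with group \(G_{k_\infty}\). Since \(\mathscr M\) is a torsion sheaf, \'etale cohomology commutes with these cofiltered limits of affine schemes (the standard limit theorem, e.g.\ in \cite{Stacks}). Combining this with the Hochschild--Serre spectral sequence for each finite Galois layer and passing to the colimit gives a spectral sequence
\[
E_2^{a,b}=H^a\bigl(G_{k_\infty},H^b_{\mathrm{et}}(\mathscr U_{\bar k},\mathscr M)\bigr)\Longrightarrow H^{a+b}_{\mathrm{et}}(\mathscr U_\infty,\mathscr M).
\]
Here \(G_{k_\infty}\) acts continuously on the discrete modules \(H^b_{\mathrm{et}}(\mathscr U_{\bar k},\mathscr M)\), because each class is defined over some finite layer.

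Next, I will show that the \(E_2\)-page is concentrated in the column \(a=0\). The sheaf \(\mathscr M\) is a constructible \(\mathbb F_\ell\)-sheaf, \(\ell\neq p\), and \(\mathscr U_{\bar k}\) is a separated curve of finite type over the algebraically closed field \(\bar k\). Theorem \ref{thm:stacks-torsion-curves}(4) therefore says that each \(H^b_{\mathrm{et}}(\mathscr U_{\bar k},\mathscr M)\) is finite. It is \(\ell\)-torsion, hence \(\ell\)-primary. Lemma \ref{lem:lisse-no-pro-l} then gives \(E_2^{a,b}=0\) for all \(a>0\).

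With only the column \(a=0\) nonzero, the spectral sequence degenerates and its edge map gives the canonical isomorphism
\[
H^i_{\mathrm{et}}(\mathscr U_\infty,\mathscr M)\simeq H^i_{\mathrm{et}}(\mathscr U_{\bar k},\mathscr M)^{G_{k_\infty}}.
\]
This is a subgroup of a finite group, so it is finite.

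The main obstacle is setting up the spectral sequence rigorously for the infinite (pro-finite) Galois cover. I would first prove the isomorphism at each finite level \(k'/k_\infty\) of degree prime to \(\ell\): there the Hochschild--Serre sequence for the finite group \(\operatorname{Gal}(k'/k_\infty)\) collapses by the same restriction--corestriction argument used in Lemma \ref{lem:lisse-no-pro-l}. I would then pass to the colimit over \(k'\), using that \'etale cohomology commutes with limits of quasi-compact quasi-separated schemes with affine transition maps.
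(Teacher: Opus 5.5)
Your proposal is correct and follows the paper's proof. Both use the Hochschild--Serre spectral sequence for $\mathscr U_{\bar k}\to\mathscr U_\infty$, take finiteness of $H^b_{\mathrm{et}}(\mathscr U_{\bar k},\mathscr M)$ from Theorem~\ref{thm:stacks-torsion-curves}(4), kill the columns $a>0$ with Lemma~\ref{lem:lisse-no-pro-l}, and then read off the isomorphism from degeneration. Your extra step (the colimit over finite layers $k'/k_\infty$ together with the limit theorem for \'etale cohomology) justifies the pro-Galois spectral sequence, which the paper uses without comment.
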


\begin{proof}

The morphism
\[
        \mathscr U_{\bar k}\longrightarrow \mathscr U_\infty
\]
is the Galois covering obtained by extending constants from \(k_\infty\) to
\(\bar k\), with Galois group \(G_{k_\infty}\). The Hochschild--Serre spectral
sequence gives
\[
E_2^{a,b}
=
H^a\left(
G_{k_\infty},
H^b_{\mathrm{et}}(\mathscr U_{\bar k},\mathscr M)
\right)
\Longrightarrow
H^{a+b}_{\mathrm{et}}(\mathscr U_\infty,\mathscr M).
\]Since
\(\mathscr U_{\bar k}\) is a curve over the algebraically closed field
\(\bar k\), and since \(\mathscr M\) is constructible with torsion
coefficients prime to \(p\), Theorem \ref{thm:stacks-torsion-curves} part (4) implies that $H^b_{\mathrm{et}}(\mathscr U_{\bar k},\mathscr M)$
is a finite-dimensional \(\mathbb F_\ell\)-vector space for every \(b\). By
Lemma \ref{lem:lisse-no-pro-l}, we have
\[
        H^a\left(
        G_{k_\infty},
        H^b_{\mathrm{et}}(\mathscr U_{\bar k},\mathscr M)
        \right)
        =
        0
        \qquad
        \text{for all }a>0.
\]
Therefore the Hochschild--Serre spectral sequence has no nonzero terms away
from the column \(a=0\).  It degenerates at \(E_2\), and gives canonical
isomorphisms
\[
        H^i_{\mathrm{et}}(\mathscr U_\infty,\mathscr M)
        \simeq
        H^0\left(
        G_{k_\infty},
        H^i_{\mathrm{et}}(\mathscr U_{\bar k},\mathscr M)
        \right).
\]
By part (4) of Theorem \ref{thm:stacks-torsion-curves}, $H^i_{\mathrm{et}}(\mathscr U_{\bar k},\mathscr M)^{G_{k_\infty}}$ is finite and the result follows.
\end{proof}

\begin{theorem}\label{thm:lisse-selmer-mu-zero}
Let \(K\) be a global function field of characteristic \(p>0\), with field of
constants \(k=\mathbb F_q\), and let \(\ell\neq p\) be a prime number.  Let
\(\mathscr U\subseteq\mathscr X\) be a nonempty open affine subscheme, and let
\(\mathscr T\) be a lisse \(\mathbb Z_\ell\)-sheaf on \(\mathscr U\) for which \(T\) is free over \(\mathbb Z_\ell\). Put
\[
        \mathscr W=\mathscr T\otimes_{\mathbb Z_\ell}\mathbb Q_\ell/\mathbb Z_\ell,
        \qquad
        \mathscr M=\mathscr T/\ell\mathscr T.
\]
Let \(\mathcal F\) be a Selmer structure on \(\mathscr W\). Then
\[
        X_{\mathcal F}(\mathscr W/K_\infty)
        :=
        \operatorname{Sel}_{\mathcal F}(\mathscr W/K_\infty)^\vee
\]
is a finitely generated and torsion \(\Lambda\)-module and
\[
        \mu_\Lambda\bigl(X_{\mathcal F}(\mathscr W/K_\infty)\bigr)=0.
\]
\end{theorem}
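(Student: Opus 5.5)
The plan is to reduce the statement to Proposition \ref{cor:lisse-mu-criterion}. That proposition shows the conclusion holds once $\operatorname{Sel}_{\mathcal F}(\mathscr M/K_\infty)$ is finite, or more precisely once $\operatorname{Sel}_{\mathcal F}(\mathscr W/K_\infty)[\ell]$ is finite. By Lemma \ref{lem:lisse-selmer-comparison} the latter is a quotient of the residual Selmer group $\operatorname{Sel}_{\mathcal F}(\mathscr M/K_\infty)$, formed with the cartesian residual conditions of Assumption \ref{assumption:cartesian-local-conditions}. That group in turn sits inside $H^1(G_S(K_\infty),\mathscr M)$. So it suffices to show that $H^1(G_S(K_\infty),\mathscr M)$ is finite. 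The local conditions then play no role, which is consistent with the theorem being stated for an arbitrary (cartesian) Selmer structure.

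\textbf{Step 1: compatibility of the limits.} First I will make sure the direct-limit definition over $K_\infty$ matches what the lemmas use. Since $K_\infty=\bigcup_n K_n$, and Galois cohomology of a profinite group commutes with direct limits along the tower, we get $H^1(G_S(K_\infty),\mathscr W)=\varinjlim_n H^1(G_S(K_n),\mathscr W)$. The local conditions are also compatible with passage to the limit. Lemma \ref{lem:lisse-selmer-comparison} was proved for an arbitrary separable extension $F\subseteq K_S$, so it applies directly with $F=K_\infty$, giving a surjection $\operatorname{Sel}_{\mathcal F}(\mathscr M/K_\infty)\twoheadrightarrow \operatorname{Sel}_{\mathcal F}(\mathscr W/K_\infty)[\ell]$.

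\textbf{Step 2: finiteness of $H^1(G_S(K_\infty),\mathscr M)$.} Next I identify $H^1(G_S(K_\infty),\mathscr M)$ with $H^1_{\mathrm{et}}(\mathscr U_\infty,\mathscr M)$, where $\mathscr U_\infty=\mathscr U\times_k k_\infty$. This uses that $\pi_1^{\mathrm{et}}(\mathscr U_\infty)=G_S(K_\infty)$ and that for $H^1$ étale and Galois cohomology of a locally constant sheaf agree. Here $\mathscr M=\mathscr T/\ell\mathscr T$ is a lisse, hence constructible, $\mathbb F_\ell$-sheaf, so Proposition \ref{prop:lisse-residual-finite} gives that $H^1_{\mathrm{et}}(\mathscr U_\infty,\mathscr M)\simeq H^1_{\mathrm{et}}(\mathscr U_{\bar k},\mathscr M)^{G_{k_\infty}}$ is finite. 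Hence $\operatorname{Sel}_{\mathcal F}(\mathscr M/K_\infty)$ is finite, and Proposition \ref{cor:lisse-mu-criterion} yields both that $X_{\mathcal F}(\mathscr W/K_\infty)$ is finitely generated and torsion and that its $\mu$-invariant vanishes.

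\textbf{Main obstacle.} The delicate point is Step 2: identifying Galois cohomology of $G_S(K_\infty)$ with étale cohomology of the open curve $\mathscr U_\infty$. In degree one this is clean, since $H^1$ classifies torsors and $\mathscr U_\infty$ is connected with fundamental group $G_S(K_\infty)$. It is only here that I would spell out details, including the fact that $\mathscr U_\infty$ is an inverse limit of the finite-level curves $\mathscr U_{K_n}$, so étale cohomology commutes with the limit. Everything else is formal from the earlier results.
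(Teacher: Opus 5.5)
Your proposal is correct and is essentially the paper's own argument. The paper likewise equips $\mathcal F$ with the cartesian residual conditions, bounds $\operatorname{Sel}_{\mathcal F}(\mathscr M/K_\infty)$ by the group $H^1_{\mathrm{et}}(\mathscr U_\infty,\mathscr M)$ (finite by Proposition~\ref{prop:lisse-residual-finite}), and concludes via Proposition~\ref{cor:lisse-mu-criterion}. As you partly observe, the cartesian conditions are not even needed: $\operatorname{Sel}_{\mathcal F}(\mathscr W/K_\infty)[\ell]\subseteq H^1(G_S(K_\infty),\mathscr W)[\ell]$, and the latter is already a quotient of the finite group $H^1(G_S(K_\infty),\mathscr M)$.
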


\begin{proof}
We define a Selmer structure on $\mathscr{W}$ so that Assumption \ref{assumption:cartesian-local-conditions} is satisfied. By Proposition \ref{prop:lisse-residual-finite}, the group $H^1_{\mathrm{et}}(\mathscr U_\infty,\mathscr M)$
is finite. Since $\operatorname{Sel}_{\mathcal F}(\mathscr M/K_\infty)$ is contained in $H^1_{\mathrm{et}}(\mathscr U_\infty,\mathscr M)$, it follows that $\operatorname{Sel}_{\mathcal F}(\mathscr M/K_\infty)$ is finite as well. The result follows from Proposition \ref{cor:lisse-mu-criterion}.
\end{proof}

We end this section with proof of an analogue of the weak Leopoldt conjecture. Let \(K\) be a global function field of characteristic \(p>0\), with field of
constants \(k=\mathbb F_q\), and let \(\ell\neq p\).  Let
\(\mathscr X/k\) be the smooth, projective, geometrically integral curve with
function field \(K\).  Let $\mathscr U\subseteq \mathscr X$ be a nonempty open and affine subscheme, and write \(S=\mathscr X\setminus\mathscr U\).
Thus \(S\) is a finite set of closed points of \(\mathscr X\).

Let \(\mathscr T\) be a lisse \(\mathbb Z_\ell\)-sheaf on \(\mathscr U\) for which \(T\) is finitely generated and free over \(\mathbb Z_\ell\). Recall that 
\[
        \mathscr W
        =
        \mathscr T\otimes_{\mathbb Z_\ell}\mathbb Q_\ell/\mathbb Z_\ell.
\]
\begin{definition}\label{def:weak-leopoldt-lisse}
We say
that weak Leopoldt holds for the pair \((\mathscr T,K_\infty)\) if
\[
        H^2\bigl(K_S/K_\infty,\mathscr W\bigr)=0.
\]
\end{definition}

\begin{theorem}\label{thm:weak-leopoldt-constant}
The weak Leopoldt conjecture in the sense of the above definition holds.
\end{theorem}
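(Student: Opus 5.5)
The plan is to compute $H^2(K_S/K_\infty,\mathscr W)$ geometrically, exactly as in the proof of Proposition \ref{prop:lisse-residual-finite}, and then use affineness of $\mathscr U$ to kill it. First, $\mathscr W=\varinjlim_n \mathscr T_n$ and Galois (equivalently étale) cohomology commutes with filtered direct limits of discrete modules. Hence
\[
H^2(K_S/K_\infty,\mathscr W)=\varinjlim_n H^2_{\mathrm{et}}(\mathscr U_\infty,\mathscr T_n),
\qquad \mathscr U_\infty=\mathscr U\times_k k_\infty .
\]
It therefore suffices to show that $H^2_{\mathrm{et}}(\mathscr U_\infty,\mathscr N)=0$ for every locally constant constructible sheaf $\mathscr N$ of $\mathbb Z/\ell^n$-modules on $\mathscr U$. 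Here we use the identification $H^i(G_S(K_\infty),-)=H^i_{\mathrm{et}}(\mathscr U_\infty,-)$ that the paper already uses for $F=K_\infty$, as the limit over the layers $\mathscr U_{K_n}$.

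Next, apply the Hochschild--Serre spectral sequence for the pro-Galois covering $\mathscr U_{\bar k}\to\mathscr U_\infty$ with group $G_{k_\infty}$:
\[
E_2^{a,b}=H^a\bigl(G_{k_\infty},H^b_{\mathrm{et}}(\mathscr U_{\bar k},\mathscr N)\bigr)\Longrightarrow H^{a+b}_{\mathrm{et}}(\mathscr U_\infty,\mathscr N).
\]
By Theorem \ref{thm:stacks-torsion-curves}(4), each $H^b_{\mathrm{et}}(\mathscr U_{\bar k},\mathscr N)$ is a finite $\ell$-primary group. Lemma \ref{lem:lisse-no-pro-l} then gives $E_2^{a,b}=0$ for $a>0$. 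Just as in Proposition \ref{prop:lisse-residual-finite}, which was stated for $\mathbb F_\ell$-sheaves but whose argument works verbatim for $\mathbb Z/\ell^n$-coefficients, this yields
\[
H^2_{\mathrm{et}}(\mathscr U_\infty,\mathscr N)\simeq H^2_{\mathrm{et}}(\mathscr U_{\bar k},\mathscr N)^{G_{k_\infty}}.
\]
Since $\mathscr U$ is affine, $\mathscr U_{\bar k}$ is an affine curve over an algebraically closed field. Theorem \ref{thm:stacks-torsion-curves}(2) gives $H^2_{\mathrm{et}}(\mathscr U_{\bar k},\mathscr N)=0$, so each term in the direct limit vanishes, and therefore $H^2(K_S/K_\infty,\mathscr W)=0$.

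The main obstacle is the bookkeeping identifying $H^2(G_S(K_\infty),\mathscr W)$ with étale cohomology of $\mathscr U_\infty$. This identification is the $K(\pi,1)$-property of the affine curve for $\ell$-torsion coefficients, and the paper takes it as given through its identification $H^i_{\mathrm{et}}(\mathscr U_F,-)=H^i(K_S/F,-)$. If one prefers not to rely on it in degree $2$, there is an alternative route. Write $H^2(G_S(K_\infty),-)$ via Hochschild--Serre for $G_S(K_\infty)\supset \pi_1(\mathscr U_{\bar k})$ with quotient $G_{k_\infty}$; the $a>0$ columns again vanish by Lemma \ref{lem:lisse-no-pro-l}. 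It then remains to show $H^2(\pi_1(\mathscr U_{\bar k}),\mathscr N)=0$. This holds because the prime-to-$p$ completion of $\pi_1$ of an affine curve over $\bar k$ is free, or equivalently by the $K(\pi,1)$-property, so the étale vanishing transfers.
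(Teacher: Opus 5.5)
Your proof is correct and follows the paper's proof: pass to the direct limit over the $\mathscr T_n$, identify $H^2(G_S(K_\infty),\mathscr T_n)$ with $H^2_{\mathrm{et}}(\mathscr U_\infty,\mathscr T_n)$, run the Hochschild--Serre argument of Proposition \ref{prop:lisse-residual-finite} (which, as you rightly note, works verbatim for $\mathbb Z/\ell^n$-coefficients), and conclude by affine vanishing over $\bar k$. On the comparison you flag, vanishing only needs the formal injectivity $H^2(\pi_1(\mathscr U_\infty),N)\hookrightarrow H^2_{\mathrm{et}}(\mathscr U_\infty,N)$, which comes from the Hochschild--Serre sequence of the pro-universal cover (its $H^1$ with lcc coefficients vanishes); by contrast, freeness of the prime-to-$p$ completion does not by itself control $H^2(\pi_1(\mathscr U_{\bar k}),N)$, because $\pi_1(\mathscr U_{\bar k})$ is not prime-to-$p$ and $N$ may be wildly ramified at $S$.
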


\begin{proof}
Since continuous Galois cohomology with discrete coefficients commutes with
filtered direct limits, it is enough to prove that
\[
        H^2\bigl(G_S(K_\infty),\mathscr T_n\bigr)=0
\]
for every \(n\). The cohomology group $H^2\bigl(G_S(K_\infty),\mathscr T_n\bigr)$ identifies with $H^2_{\mathrm{et}}(\mathscr U_\infty,\mathscr T_n)$. The result then follows from Proposition \ref{prop:lisse-residual-finite} and part (2) of Theorem \ref{thm:stacks-torsion-curves}.
\end{proof}

\section{Deformation rings and unobstructedness}

\par Let \(K_\infty/K\) be a \(\mathbb Z_\ell\)-extension of \(K\). Given an algebraic extension $F/K$ contained in $K_S$, we set $G_S(F):=\operatorname{Gal}(K_S/F)$. Let \(\mathbb F\) be a finite field of characteristic \(\ell\), and let $\mathcal O=W(\mathbb F)$
be the ring of Witt vectors of \(\mathbb F\). In other words, \(\mathcal O\) is the ring
of integers of the unramified extension of \(\mathbb Q_\ell\) with residue
field \(\mathbb F\). We write \(\varpi\) for a uniformizer of \(\mathcal O\).

\par Let $\bar r:G_{K,S}\longrightarrow \operatorname{GL}_d(\mathbb F)$ be a continuous Galois representation. We set $\bar r_\infty:=\bar r|_{G_S(K_\infty)}$ and $\bar r_n:=\bar r|_{G_S(K_n)}$.
Let $V_{\bar r}:=\mathbb F^d$ be the underlying \(\mathbb F\)-vector space of \(\bar r\). The adjoint
representation attached to \(\bar r\) is defined as follows:
\[
        \operatorname{Ad}\bar r
        :=
        \operatorname{End}_{\mathbb F}(V_{\bar r}),
\]
with \(G_S(K)\)-action given by conjugation 
\[g\cdot X
        :=
        \bar r(g)X\bar r(g)^{-1}\] for $g\in G_{K,S}$ and $X\in\operatorname{End}_{\mathbb F}(V_{\bar r})$. If one works with deformations of fixed determinant, then
\(\operatorname{Ad}\bar r\) is replaced throughout by the trace-zero adjoint
representation
\[
        \operatorname{Ad}^0\bar r
        :=
        \{X\in \operatorname{End}_{\mathbb F}(V_{\bar r})\mid \operatorname{tr}(X)=0\}.
\]
We have a decomposition of \(G_{K,S}\)-modules
\[
        \operatorname{Ad}\bar r
        \simeq
        \operatorname{Ad}^0\bar r\oplus \mathbb F\cdot \operatorname{Id}.
\]
Note that the Galois action on \(\mathbb F\cdot \operatorname{Id}\) is the trivial one.

\par Let \(\mathcal C_{\mathcal O}\) denote the category of Artinian local
\(\mathcal O\)-algebras $(R, \mathfrak{m}_R)$ equipped with the unique isomorphism of $\cO$-algebras $R/\mathfrak m_R\simeq \mathbb F$.
Morphisms in \(\mathcal C_{\mathcal O}\) are local \(\mathcal O\)-algebra
homomorphisms $\varphi: (R_1, \mathfrak{m}_{R_1})\rightarrow(R_2, \mathfrak{m}_{R_2}) $ such that the reduced map 
\[\bar{\varphi}: R_1/\mathfrak{m}_{R_1}\rightarrow R_2/\mathfrak{m}_{R_2}\]
induces the identity on \(\mathbb F\).  We also let
\(\operatorname{CNL}_{\mathcal O}\) denote the category of complete noetherian
local \(\mathcal O\)-algebras with residue field \(\mathbb F\). An object of
\(\operatorname{CNL}_{\mathcal O}\) may be regarded as a projective limit of
objects of \(\mathcal C_{\mathcal O}\).
\begin{definition}For \(R\in\mathcal C_{\mathcal O}\) and $\ast\in \Z_{\geq 1}\cup \{\infty\}$ a lift of \(\bar r_\ast\) to \(R\)
is a continuous representation
\[
        r_R:G_S(K_\ast)\longrightarrow \operatorname{GL}_d(R)
\]
such that the reduction of \(r_R\) modulo \(\mathfrak m_R\) is equal to
\(\bar r_\ast\). Two lifts \(r_R\) and \(r_R'\) are said to be strictly
equivalent if there exists a matrix $A\in 1+M_d(\mathfrak m_R)$ such that $r_R'(g)=A r_R(g) A^{-1}$ for all $g\in G_S(K_\ast)$. 
The deformation functor
\[
        \operatorname{Def}_{\bar r_\ast}:\mathcal C_{\mathcal O}
        \longrightarrow \operatorname{Sets}
\]
is defined by sending \(R\) to the set of strict equivalence classes of lifts
of \(\bar r_\ast\) to \(R\).
\end{definition}

\par We shall consider the framed deformation functor
\[
        \operatorname{Def}_{\bar r_\ast}^{\square}:
        \mathcal C_{\mathcal O}\longrightarrow \operatorname{Sets}
\]
sends \(R\) to the set of all lifts
\[
        r_R:G_S(K_\ast)\longrightarrow \operatorname{GL}_d(R)
\]
of \(\bar r_\ast\), without quotienting by strict equivalence.

\par In the deformation theory of Galois representations, the study of infinitesimal lifts plays a major role. They are controlled by the cohomology of the adjoint
representation. This is seen naturally when we consider deformations of $\bar{r}$ to the dual numbers $\mathbb F[\varepsilon]:=\mathbb F[t]/(t^2)$. Here, $\F[\epsilon]$ is an $\cO$-algebra and the structure morphism factors as follows
\[\cO\twoheadrightarrow \F\hookrightarrow \F[\epsilon],\] where the first map is the reduction map and the second is the natural inclusion. A lift of \(\bar r_\ast\) to \(\mathbb F[\varepsilon]\) is a
representation of the form
\[
        r_c(g)
        =
        \bigl(1+\varepsilon c(g)\bigr)\bar r_\ast(g),
\]
where $c:G_S(K_\ast)\longrightarrow \operatorname{Ad}\bar r$
is a continuous map. The condition that \(r_c\) be a homomorphism is
equivalent to the cocycle condition
\[c(gh)=c(g)+g\cdot c(h)\] for all $g,h\in G_S(K_\ast)$,
where \(g\cdot c(h)\) denotes the conjugation action through \(\bar r_\infty\).
Thus lifts to dual numbers are parametrized by continuous \(1\)-cocycles $Z^1\bigl(G_S(K_\ast),\operatorname{Ad}\bar r\bigr)$.

\par Strict equivalence corresponds to quotienting by coboundaries. Indeed,
conjugation by a matrix of the form $1+\varepsilon X$ with $X\in \operatorname{Ad}\bar r$ changes \(c\) to
\[
        c'(g)=c(g)+X-g\cdot X.
\]
Thus \(c\) and \(c'\) define strictly equivalent deformations if and only if
they differ by a \(1\)-coboundary.  Therefore the tangent space of the unframed
deformation functor is canonically identified with
\[
        \operatorname{Def}_{\bar r_\ast}(\mathbb F[\varepsilon])
        \simeq
        H^1\bigl(G_S(K_\ast),\operatorname{Ad}\bar r\bigr).
\]
For the framed deformation functor, no quotient by strict equivalence is taken,
and hence
\[
        \operatorname{Def}^{\square}_{\bar r_\ast}(\mathbb F[\varepsilon])
        \simeq
        Z^1\bigl(G_S(K_\ast),\operatorname{Ad}\bar r\bigr).
\]
If the determinant is fixed, then the same discussion gives
\[
        \operatorname{Def}_{\bar r_\ast}^{\det=\psi}(\mathbb F[\varepsilon])
        \simeq
        H^1\bigl(G_S(K_\ast),\operatorname{Ad}^0\bar r\bigr),
\]
where \(\psi\) is the prescribed determinant lifting
\(\det(\bar r_\ast)\).
\par We recall the standard representability statement for deformation functors
at finite level.  For background, see Mazur \cite{Mazurdeforming} and Kisin
\cite[Proposition 1.2.1]{Kisin}.  Let \(n\geq 1\), and set
\[
        \bar r_n:=\bar r|_{G_S(K_n)}.
\]
Since \(K_n/K\) is a finite extension and \(S\) is finite, the profinite group
\(G_S(K_n)\) satisfies Mazur's finiteness condition \(\Phi_\ell\).  Therefore
the standard deformation-theoretic representability results apply to
\(\bar r_n\).

\par The framed deformation functor
\(\operatorname{Def}_{\bar r_n}^{\square}\) is pro-representable.  Thus there
exists a complete noetherian local \(\mathcal O\)-algebra
\[
        R_{\bar r_n}^{\square}\in \operatorname{CNL}_{\mathcal O}
\]
and a universal framed lift
\[
        r_{n,\mathrm{univ}}^{\square}:
        G_S(K_n)\longrightarrow
        \operatorname{GL}_d(R_{\bar r_n}^{\square})
\]
with the following universal property: for every
\(R\in\mathcal C_{\mathcal O}\), composition with
\(r_{n,\mathrm{univ}}^{\square}\) induces a functorial bijection
\[
        \operatorname{Hom}_{\operatorname{CNL}_{\mathcal O}}
        (R_{\bar r_n}^{\square},R)
        \simeq
        \operatorname{Def}_{\bar r_n}^{\square}(R).
\]
Here the word framed means that one remembers a choice of basis lifting the
fixed residual basis of \(V_{\bar r}\).  Equivalently,
\(\operatorname{Def}_{\bar r_n}^{\square}(R)\) consists of lifts of
\(\bar r_n\) to \(R\), without quotienting by conjugation.

\par The unframed deformation functor is more subtle because one quotients
lifts by strict equivalence, namely by conjugation by matrices congruent to the
identity modulo the maximal ideal of \(R\).  A standard sufficient hypothesis
for pro-representability is the scalar-endomorphism condition
\[
        \operatorname{End}_{\mathbb F[G_S(K_n)]}(V_{\bar r})=\mathbb F.
\]
Assume this condition.  Then the functor
\(\operatorname{Def}_{\bar r_n}\) is pro-representable by a complete noetherian
local \(\mathcal O\)-algebra
\[
        R_{\bar r_n}\in \operatorname{CNL}_{\mathcal O}.
\]
Equivalently, there is a universal lift
\[
        r_{n,\mathrm{univ}}:
        G_S(K_n)\longrightarrow \operatorname{GL}_d(R_{\bar r_n}),
\]
well-defined up to strict equivalence, such that for every
\(R\in\mathcal C_{\mathcal O}\) one has a functorial bijection
\[
        \operatorname{Hom}_{\operatorname{CNL}_{\mathcal O}}
        (R_{\bar r_n},R)
        \simeq
        \operatorname{Def}_{\bar r_n}(R).
\]
The scalar-endomorphism hypothesis ensures that the residual representation
has no non-scalar automorphisms.  This is precisely the condition needed for
the functor of strict equivalence classes to satisfy Schlessinger's
representability criterion.

\par The same discussion applies to fixed-determinant deformations.  Fix a
continuous character
\[
        \psi:G_S(K_n)\longrightarrow \mathcal O^\times
\]
lifting \(\det(\bar r_n)\).  For \(R\in\mathcal C_{\mathcal O}\), let
\(\psi_R\) denote the character obtained from \(\psi\) by composing
\(\mathcal O^\times\to R^\times\).  The fixed-determinant deformation functor
\(\operatorname{Def}_{\bar r_n}^{\det=\psi}\) assigns to \(R\) the set of
strict equivalence classes of lifts
\[
        r_R:G_S(K_n)\longrightarrow \operatorname{GL}_d(R)
\]
whose determinant is \(\psi_R\).  Under the same scalar-endomorphism hypothesis
\[
        \operatorname{End}_{\mathbb F[G_S(K_n)]}(V_{\bar r})=\mathbb F,
\]
this functor is pro-representable by a complete noetherian local
\(\mathcal O\)-algebra $R_{\bar r_n}^{\det=\psi}$. It may be described as the quotient of \(R_{\bar r_n}\) obtained by imposing
the equations
\[
        \det(r_{n,\mathrm{univ}})=\psi.
\]
For the framed fixed-determinant functor, no scalar-endomorphism hypothesis is
needed.  It is represented by the corresponding determinant quotient of the
framed deformation ring \(R_{\bar r_n}^{\square}\). For ease of notation, we set $G_n:=G_S(K_n)$ and $G_\infty:=G_S(K_\infty)$. Let \(\widehat{\mathcal C}_{\mathcal O}\) be the category of complete
local \(\mathcal O\)-algebras with residue field \(\mathbb F\) (not necessarily
noetherian). Assume that, for every \(n\geq 0\), the deformation functor $\operatorname{Def}_{\bar r_n}$ is pro-representable by a complete noetherian local \(\mathcal O\)-algebra
\(R_{\bar r_n}\). The restriction maps
\[
        G_{n+1}\subseteq G_n
\]
induce natural transformations
\[
        \operatorname{Def}_{\bar r_n}
        \longrightarrow
        \operatorname{Def}_{\bar r_{n+1}},
\]
and hence transition maps
\[
        R_{\bar r_{n+1}}\longrightarrow R_{\bar r_n}.
\]
Set
\[
        R_{\bar r_\infty}
        :=
        \varprojlim_n R_{\bar r_n}.
\]

\begin{proposition}\label{prop:deformation-ring-infinite-layer}
With respect to notation above, the deformation functor of
\(\bar r_\infty\) is prorepresentable by \(R_{\bar r_\infty}\). More precisely,
for every \(A\in\mathcal C_{\mathcal O}\), there is a natural bijection
\[
        \operatorname{Hom}_{\widehat{\mathcal C}_{\mathcal O}}
        \left(R_{\bar r_\infty},A\right)
        \simeq
        \operatorname{Def}_{\bar r_\infty}(A).
\]
\end{proposition}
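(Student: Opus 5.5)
The plan is to reduce everything to the statement that a continuous lift of \(\bar r_\infty\) to a finite ring \(A\in\mathcal C_{\mathcal O}\) comes, up to strict equivalence, from a lift over some finite layer \(G_n\). Then I pass to the limit. Everything rests on the fact that \(G_\infty=\bigcap_n G_n\), with \(G_\infty=\varprojlim\) in the sense that \(G_n\) is a decreasing chain of open subgroups of \(G_0\) with intersection \(G_\infty\).

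\textbf{Step 1 (the Hom side).} Since \(A\) is Artinian with finite residue field \(\mathbb F\), it is a finite ring, hence discrete. Every continuous local \(\mathcal O\)-homomorphism \(R_{\bar r_\infty}=\varprojlim_n R_{\bar r_n}\to A\) therefore factors through some \(R_{\bar r_n}\). The reason is that its kernel is open, and the images of the projections \(R_{\bar r_\infty}\to R_{\bar r_n}\) determine the inverse-limit topology. Consequently
\[
\operatorname{Hom}_{\widehat{\mathcal C}_{\mathcal O}}(R_{\bar r_\infty},A)\simeq\varinjlim_n\operatorname{Hom}(R_{\bar r_n},A)\simeq\varinjlim_n\operatorname{Def}_{\bar r_n}(A).
\]
I should be honest that this needs some care: the map \(\varprojlim_n R_{\bar r_n}\to R_{\bar r_n}\) need not be surjective. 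I will handle this by working with the directed system directly, either by using the images, or by noting that continuous maps to a discrete target factor through some finite stage of a pro-system of profinite rings.

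\textbf{Step 2 (the Def side).} I will prove that the natural map \(\varinjlim_n\operatorname{Def}_{\bar r_n}(A)\to\operatorname{Def}_{\bar r_\infty}(A)\) is bijective.

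For surjectivity, take a continuous lift \(r:G_\infty\to\operatorname{GL}_d(A)\). Its kernel \(N\) is open in \(G_\infty\), so \(N=U\cap G_\infty\) for some open normal subgroup \(U\) of \(G_0\). Since \(\bigcap_n G_n=G_\infty\), compactness gives \(G_nU\cap\ker\bar r\)-type statements, from which I will extract an \(n\) with \(G_n\cap U\subseteq\ker\) in a suitable sense. The intended mechanism is this: \(G_\infty\) is normal in \(G_0\) with quotient \(\Gamma\cong\mathbb Z_\ell\), and \(G_\infty/N\) is finite. I want to extend \(r\) to \(G_n\) for large \(n\), with \(r\) restricting to \(\bar r\) modulo \(\mathfrak m_A\). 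The natural approach is to use that \(G_n/(G_\infty\cap U)\) is an extension of \(\Gamma^{\ell^n}\cong\mathbb Z_\ell\) by a finite group, and that extensions by a free pro-\(\ell\) (hence projective) quotient split. I will then define the extension using a lift \(\tilde\gamma\) of \(\gamma^{\ell^n}\), sending it to an element of \(\operatorname{GL}_d(A)\) that normalizes \(r\) compatibly and reduces to \(\bar r(\tilde\gamma)\).

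For injectivity, if two lifts at level \(n\) become strictly equivalent on \(G_\infty\) via \(B\in 1+M_d(\mathfrak m_A)\), I will show the equivalence already holds on some \(G_m\). Both lifts, viewed as continuous maps to the finite set \(\operatorname{GL}_d(A)\), and the conjugate agree on \(G_\infty=\bigcap G_m\). Hence, by compactness, they agree on some open \(G_m\): the locus where \(Br_1B^{-1}=r_2\) is an open-closed subgroup containing \(\bigcap_m G_m\), so it contains some \(G_m\).

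\textbf{Main obstacle.} The hard part is surjectivity, namely extending an arbitrary lift of \(\bar r_\infty\) to some \(G_n\). The extension must respect the conjugation action of the lift of \(\gamma^{\ell^n}\) on \(r\), and must reduce to \(\bar r\). My first approach uses finiteness: the finitely many lifts in \(\operatorname{Hom}(G_\infty/N,\operatorname{GL}_d(A))\) are permuted by \(\Gamma\), so some \(\gamma^{\ell^n}\) fixes \(r\) up to strict equivalence. I would then choose the image of \(\tilde\gamma\) as the product of \(\bar r(\tilde\gamma)\)-lift and the intertwining matrix. After raising \(n\) further, the projectivity of \(\mathbb Z_\ell\) and the pro-\(\ell\) nature of \(1+M_d(\mathfrak m_A)\) make this element compatible.
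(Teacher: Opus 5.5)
\textbf{Verdict: correct, but by a different and heavier route than the one the paper cites.}

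Your overall structure matches the standard argument that the paper invokes by citing Burungale--Clozel. You identify both sides with $\varinjlim_n$ of the finite-level objects. On the $\operatorname{Hom}$ side you correctly flag that the maps $R_{\bar r_\infty}\to R_{\bar r_n}$ need not be surjective. On the $\operatorname{Def}$ side you prove injectivity via the open equalizer subgroup containing $\bigcap_m G_m$.

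\textbf{Where you diverge: the surjectivity step.} Here your route is much heavier than necessary. The standard argument is purely topological.
\begin{itemize}
\item Choose an open normal subgroup $U\subseteq G_0$ with $U\subseteq\ker\bar r$ and $U\cap G_\infty\subseteq\ker r$.
\item The groups $G_nU/U$ form a decreasing chain in the finite group $G_0/U$. By compactness, $\bigcap_n G_nU=G_\infty U$, so $G_nU=G_\infty U$ for $n\gg0$.
\item Hence inclusion induces an isomorphism $G_\infty/(G_\infty\cap U)\xrightarrow{\sim}G_n/(G_n\cap U)$, and $r$ extends to $G_n$ through this quotient.
\item The extension automatically reduces to $\bar r_n$. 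Indeed, $\bar r_n$ also factors through $G_n/(G_n\cap U)$ and agrees with the reduction on the image of $G_\infty$, which is the whole quotient.
\end{itemize}
Field-theoretically, the fixed field $L$ of $\ker r$ descends to a finite Galois $L_n/K_n$ with $\operatorname{Gal}(L_n/K_n)\simeq\operatorname{Gal}(L/K_\infty)$.

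\textbf{Your alternative also works, with more checks.} You split $G_n/(G_\infty\cap U)\simeq (G_\infty/(G_\infty\cap U))\rtimes\mathbb Z_\ell$, find $n$ such that $\gamma^{\ell^n}$ stabilizes the strict-equivalence class of $r$, and send the splitting generator $\tilde\gamma$ to $M=\tilde M B$. This goes through, but you must verify the following.
\begin{itemize}
\item The twisted action $(g\ast r)(x)=\tilde M_g^{-1}r(gxg^{-1})\tilde M_g$, with $\tilde M_g$ lifting $\bar r(g)$, is well defined on strict-equivalence classes, and $G_\infty$ acts trivially.
\item $M$ has $\ell$-power order, so that $\tilde\gamma^a\mapsto M^a$ is continuous on $\mathbb Z_\ell$. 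This holds because $\bar r(\tilde\gamma)$ lies in the image of a continuous map from $\mathbb Z_\ell$, and $1+M_d(\mathfrak m_A)$ is an $\ell$-group.
\end{itemize}
Your route uses projectivity of $\Gamma\simeq\mathbb Z_\ell$ and normality of $G_\infty$. The direct argument needs neither and works for any decreasing chain of open subgroups with intersection $G_\infty$. What your version does make explicit is the $\Gamma$-action on deformation classes, which is not needed for this statement.

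\textbf{Two small corrections.}
\begin{itemize}
\item An open normal $N\subseteq G_\infty$ need not equal $U\cap G_\infty$; you only get $U\cap G_\infty\subseteq N$. You must work with $U\cap G_\infty$, which is normal in $G_0$, for any action to be defined.
\item $\Gamma$ does not permute $\operatorname{Hom}(G_\infty/N,\operatorname{GL}_d(A))$ itself. Inner automorphisms from $G_\infty$ act nontrivially, and plain precomposition with conjugation by $\tilde\gamma$ does not preserve the reduction $\bar r$. Only the twisted action on strict-equivalence classes factors through $\Gamma$.
\end{itemize}
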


\begin{proof}
The proof follows from a standard argument \cite[Lemma 1.3]{BurungaleClozel}, also see \cite[Section 3.3]{ClozelFormes}.
\end{proof}

\par The obstruction theory is described using small extensions.  A small
extension in \(\mathcal C_{\mathcal O}\) is a surjection $R'\twoheadrightarrow R$
whose kernel \(I\) is principal and satisfies
\[
        I\mathfrak m_{R'}=0.
\]
In particular, \(I^2=0\), and \(I\) is naturally an \(\mathbb F\)-vector space. Let $t$ be a generator of $I$. Suppose that
\[
        r_R:G_S(K_\infty)\longrightarrow \operatorname{GL}_d(R)
\]
is a lift of \(\bar r_\infty\).  Choose for each $g\in G_S(K_\infty)$, a lift
\[
        \widetilde r(g)\in \operatorname{GL}_d(R')
\]
of \(r_R(g)\). These choices need not define a homomorphism. The defect is
measured by
\[
        \widetilde r(g)\widetilde r(h)\widetilde r(gh)^{-1}
        \in
        1+M_d(I).
\]
Since \(I^2=0\), we may write this element uniquely in the form
\[
        1+\alpha(g,h),
        \qquad
        \alpha(g,h)\in \operatorname{Ad}\bar r\otimes_{\mathbb F} I.
\]
The function
\[
        \alpha:G_S(K_\infty)\times G_S(K_\infty)
        \longrightarrow
        \operatorname{Ad}\bar r\otimes_{\mathbb F} I
\]
is a continuous \(2\)-cocycle.  Its cohomology class
\[
        [\alpha]\in
        H^2\bigl(G_S(K_\infty),\operatorname{Ad}\bar r\bigr)
        \otimes_{\mathbb F} I
\]
is independent of the choices of the set-theoretic lifts
\(\widetilde r(g)\).  This class is the cohomological obstruction to lifting \(r_R\) to
\(R'\).  More precisely, \(r_R\) admits a lift to \(R'\) if and only if
\[
        [\alpha]=0.
\]
If the determinant is fixed, the obstruction lies in
\[
        H^2\bigl(G_S(K_\infty),\operatorname{Ad}^0\bar r\bigr)
        \otimes_{\mathbb F} I.
\]
\par The preceding discussion gives the unobstructedness criterion for infinitesimal liftings, which is to say that if
\[
        H^2\bigl(G_S(K_\infty),\operatorname{Ad}\bar r\bigr)=0,
\]
then every lift over an object \(R\in\mathcal C_{\mathcal O}\) lifts across
every small extension
\[
        R'\twoheadrightarrow R.
\]
Equivalently, the framed deformation functor is formally smooth over
\(\mathcal O\).  In this case the universal framed deformation ring is a formal
power series ring over \(\mathcal O\),
\[
        R_{\bar r_\infty}^{\square}
        \simeq
        \mathcal O[[x_1,\dots,x_m]],
\]
where
\[
        m:=\dim_{\mathbb F}
        Z^1\bigl(G_S(K_\infty),\operatorname{Ad}\bar r\bigr)
\]
provided this dimension is finite.  If, in addition,
\[
        \operatorname{End}_{\mathbb F[G_S(K_\infty)]}(V_{\bar r})=\mathbb F,
\]
then the unframed deformation functor is pro-representable, and
\[
        H^2\bigl(G_S(K_\infty),\operatorname{Ad}\bar r\bigr)=0
\]
implies that $R_{\bar r_\infty}$ is formally smooth over
\(\mathcal O\). Further, if 
\[
        m':=\dim_{\mathbb F}
        H^1\bigl(G_S(K_\infty),\operatorname{Ad}\bar r\bigr)
\]
is finite, then it is a formal power series ring over $\cO$ in $m'$ variables.

\begin{definition}
We say that $\bar r_\infty$ is \emph{unobstructed} if
\[
        H^2\bigl(G_S(K_\infty),\operatorname{Ad}\bar r\bigr)=0.
\]
\end{definition}

\begin{theorem}\label{thm:constant-tower-deformation-unobstructed}
Let $K=k(\mathscr{X})$ be a global function field of characteristic $p>0$ and $\ell\neq p$ be a prime number. Let $K_\infty/K$ be the $\Z_\ell$-extension of $K$ and $S$ be a finite set of closed points on $\mathscr{X}$ such that $\mathscr{U}:=\mathscr{X}\setminus S$ is affine. Let $\F$ be a finite field of characteristic $\ell$ and $\cO$ be its ring of Witt vectors. Given a Galois representation $\bar{r}:\op{G}_S(K)\rightarrow \op{GL}_d(\F)$, the associated framed deformation ring is a formal power series ring in finitely many variables:
\[R_{\bar{r}_\infty}^{\square}\simeq \cO[[x_1, \dots, x_m]].\] Moreover, if 
\[\operatorname{End}_{\mathbb F[G_S(K_\infty)]}(V_{\bar r})=\mathbb F\] then the (unframed) deformation ring is a formal power series in finitely many variables
\[R_{\bar{r}_\infty}\simeq \cO[[y_1, \dots, y_{m'}]].\] 
\end{theorem}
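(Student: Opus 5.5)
The plan is to reduce everything to two cohomological facts about the constructible $\mathbb F$-sheaf $\operatorname{Ad}\bar r$ on $\mathscr U$. First, $H^2(G_S(K_\infty),\operatorname{Ad}\bar r)=0$. Second, $H^1(G_S(K_\infty),\operatorname{Ad}\bar r)$ is finite dimensional. Once these are in hand, the unobstructedness criterion recorded just before the definition of unobstructedness gives formal smoothness, and the finiteness of the tangent space pins down the number of variables.

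\emph{Step 1 (cohomology of the adjoint).} Since $\bar r$ factors through $G_S(K)\simeq\pi_1^{\mathrm{et}}(\mathscr U,\bar\eta)$ and is continuous with finite image, $\operatorname{Ad}\bar r$ is a locally constant constructible sheaf of $\mathbb F$-vector spaces on $\mathscr U$. It is $\ell$-torsion, hence prime to $p$.

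\begin{itemize}
\item I identify $H^i(G_S(K_\infty),\operatorname{Ad}\bar r)$ with $H^i_{\mathrm{et}}(\mathscr U_\infty,\operatorname{Ad}\bar r)$, exactly as in the proof of Theorem~\ref{thm:weak-leopoldt-constant}.
\item Proposition~\ref{prop:lisse-residual-finite} is stated for $\mathbb F_\ell$-sheaves. Its proof works verbatim for any constructible $\ell$-torsion sheaf: one only uses Lemma~\ref{lem:lisse-no-pro-l} and part (4) of Theorem~\ref{thm:stacks-torsion-curves}. Alternatively, one regards $\operatorname{Ad}\bar r$ as an $\mathbb F_\ell$-sheaf via restriction of scalars.
\item This gives $H^i_{\mathrm{et}}(\mathscr U_\infty,\operatorname{Ad}\bar r)\simeq H^i_{\mathrm{et}}(\mathscr U_{\bar k},\operatorname{Ad}\bar r)^{G_{k_\infty}}$.
\item Since $\mathscr U_{\bar k}$ is an affine curve over an algebraically closed field, part (2) of Theorem~\ref{thm:stacks-torsion-curves} kills the right side for $i=2$.
\item Part (4) of the same theorem gives finiteness for $i=1$.
\end{itemize}

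\emph{Step 2 (framed ring).} Consider $Z^1(G_\infty,\operatorname{Ad}\bar r)$. It is an extension of $H^1$ by $B^1$, and $B^1$ is a quotient of $\operatorname{Ad}\bar r$. So $m=\dim_{\mathbb F}Z^1\le \dim H^1+d^2$ is finite.

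I then want $R^{\square}_{\bar r_\infty}$ to exist and be identified with $\mathcal O[[x_1,\dots,x_m]]$. The plan is as follows.

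\begin{itemize}
\item Construct $R^{\square}_{\bar r_\infty}$ as $\varprojlim_n R^{\square}_{\bar r_n}$. The framed analogue of Proposition~\ref{prop:deformation-ring-infinite-layer} follows by the same argument, since every lift of $\bar r_\infty$ to an Artinian $A$ has finite image and hence factors through some $G_n$ after extending.
\item Because the tangent space $Z^1$ is finite dimensional, Schlessinger's criteria show the functor $\operatorname{Def}^{\square}_{\bar r_\infty}$ on $\mathcal C_{\mathcal O}$ is pro-represented by a noetherian complete local ring.
\item Choose a surjection $\mathcal O[[x_1,\dots,x_m]]\twoheadrightarrow R^{\square}_{\bar r_\infty}$ inducing an isomorphism on reduced tangent spaces.
\item The vanishing of $H^2$ makes the functor formally smooth, by the obstruction discussion preceding the statement. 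A standard argument, lifting the identity successively through small extensions, then shows this surjection is an isomorphism.
\end{itemize}

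\emph{Step 3 (unframed ring).} Under $\operatorname{End}_{\mathbb F[G_\infty]}(V_{\bar r})=\mathbb F$, the scalar-endomorphism condition also holds for each $G_n\supseteq G_\infty$. So the rings $R_{\bar r_n}$ exist, and Proposition~\ref{prop:deformation-ring-infinite-layer} gives pro-representability by $R_{\bar r_\infty}$.

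The tangent space is $H^1(G_\infty,\operatorname{Ad}\bar r)$, of finite dimension $m'$, and obstructions vanish. The same smoothness argument as in Step 2 then yields $R_{\bar r_\infty}\simeq\mathcal O[[y_1,\dots,y_{m'}]]$.

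\emph{Main obstacle.} I expect the delicate point to be Step 2 and Step 3's passage from the inverse limit $\varprojlim_n R_{\bar r_n}$, a priori non-noetherian, to a noetherian ring. What I would try first is to use finite-dimensionality of the tangent space at level $\infty$ to show that $R_{\bar r_\infty}/\mathfrak m^2$ is finite. Then the complete Nakayama argument, Lemma~\ref{lem:balister-howson-nakayama}, gives a surjection from a power series ring in $m'$ (respectively $m$) variables, and smoothness upgrades this surjection to an isomorphism.
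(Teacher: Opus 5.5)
Your proposal is correct and follows the same route as the paper. You identify $H^i(G_S(K_\infty),\operatorname{Ad}\bar r)$ with $H^i_{\mathrm{et}}(\mathscr U_{\bar k},\operatorname{Ad}\bar r)^{G_{k_\infty}}$ via Proposition~\ref{prop:lisse-residual-finite}, kill $H^2$ by affineness (part (2) of Theorem~\ref{thm:stacks-torsion-curves}), and get finiteness of $H^1$, hence of $Z^1$, from part (4) before invoking the unobstructedness criterion. Your extra care over representability fills in points the paper leaves implicit without changing the argument: you use Schlessinger with a finite-dimensional tangent space, and you descend the scalar-endomorphism hypothesis from $G_\infty$ to each $G_n$ so that Proposition~\ref{prop:deformation-ring-infinite-layer} applies.
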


\begin{proof}
Let $\operatorname{Ad}\bar r
        =
        \operatorname{End}_{\mathbb F}(V_{\bar r})$ with \(G_S(K)\)-action by conjugation.  Since \(\bar r\) is unramified outside
\(S\), the representation \(\operatorname{Ad}\bar r\) defines a constructible
\(\mathbb F\)-sheaf on \(\mathscr U=\mathscr X\setminus S\). By Proposition \ref{prop:lisse-residual-finite}, applied to the constructible
\(\mathbb F\)-sheaf \(\operatorname{Ad}\bar r\), we have canonical
isomorphisms
\[
        H^i(G_\infty,\operatorname{Ad}\bar r)
        =
        H^i_{\mathrm{\acute et}}
        (\mathscr U_\infty,\operatorname{Ad}\bar r)
        \simeq
        H^i_{\mathrm{\acute et}}
        (\mathscr U_{\bar k},\operatorname{Ad}\bar r)^{G_{k_\infty}}
\]
for all \(i\geq 0\). Since \(\mathscr U\) is
affine, the curve \(\mathscr U_{\bar k}\) is affine over the algebraically
closed field \(\bar k\).  Hence, by Theorem
\ref{thm:stacks-torsion-curves} part (2), one has
\[
        H^2_{\mathrm{\acute et}}
        (\mathscr U_{\bar k},\operatorname{Ad}\bar r)=0
\]
and it follows that
\[
        H^2(G_\infty,\operatorname{Ad}\bar r)=0.
\]
Thus the deformation functors are unobstructed and the associated deformation rings are formally smooth. 

\par It follows from Theorem \ref{thm:stacks-torsion-curves} part (4) that $H^1(G_\infty,\operatorname{Ad}\bar r)$ is finite-dimensional over \(\mathbb F\). The space of $1$-coboundaries is clearly finite dimensional and hence $Z^1(G_\infty,\operatorname{Ad}\bar r)$ is finite-dimensional. Therefore, the deformation rings are formal power series rings over $\cO$ in finitely many variables.
\end{proof}

\bibliographystyle{alpha}
\bibliography{references}
\end{document}